\author{Abdallah Assi} 
\address{Universit\'e d'Angers, Math\'ematiques,
49045 Angers ceded 01, France}
\email{assi@univ-angers.fr} 
\thanks{The first author is partially supported by the project GDR CNRS 2945 and a GENIL-SSV 2014 grant}
\author{Pedro A. Garc\'{\i}a-S\'anchez}
\address{Departamento de \'Algebra, Universidad de Granada, E-18071 Granada, Espa\~na}
\email{pedro@ugr.es} \urladdr{www.ugr.es/local/pedro}
\thanks{The second author is supported by the projects MTM2010-15595, FQM-343,  FQM-5849, G\'eanpyl (FR n\textordmasculine 2963 du CNRS), and FEDER funds}
\title{On curves with one place at infinity}
\subjclass[2010]{14R05, 14Q05, 68W01, 20M14}
\keywords{one place curve, semigroup of values, approximate roots}
\newtheorem{teorema}{Theorem}
\newtheorem{proposicion}[teorema]{Proposition}
\newtheorem{lema}[teorema]{Lemma}
\newtheorem{corolario}[teorema]{Corollary}
\theoremstyle{remark}
\newtheorem{nota}[teorema]{Remark}
\newtheorem{example}[teorema]{Example}
\newtheorem{exemples}[teorema]{Examples}
\newcommand{\KK}{{\mathbb K}}
\newcommand{\NN}{{\mathbb N}}
\newcommand{\PP}{{\bf P}}
\begin{document}
\maketitle
\begin{abstract}
Let $f$ be a plane curve. We give a procedure based on Abhyankar's approximate roots to detect if it has a single place at infinity, and if so construct its associated $\delta$-sequence, and consequently its value semigroup. Also for fixed genus (equivalently Frobenius number) we construct all $\delta$-sequences generating numerical semigroups with this given genus. For a $\delta$-sequence we present a procedure to construct all curves having this associated sequence. 

We also study the embeddings of such curves in the plane. In particular, we prove that polynomial curves might not have a unique embedding.
\end{abstract}

\section*{Introduction}

Let $\KK$ be an algebraically closed field of characteristic zero and let $f(x,y)=y^n+a_1(x)y^{n-1}+\dots+a_n(x)$ be a nonzero polynomial of $\KK[x][y]$. Assume, after possibly a change of variables, that $\deg_x(a_i(x))<i$ for all $1\leq i\leq n$. Write $f(x,y)=y^n+\sum_{i,j, i+j<n}c_{ij}x^iy^j$ and let $F(x,y,u)=y^n+\sum c_{ij}u^{n-i-j}x^iy^j\in\KK[u,x,y]$. Let $C$ be the curve $f=0$ in $\KK^2$. Then the projective curve $\bar{C}: F=0$ is the projective closure of $C$ in $\PP_K^2$. Furthermore, $p=(0,1,0)$ is the unique point at infinity of $\bar{C}$ and $F(u,1,y)$ is the local equation of $\bar{C}$ at  $p$. We say that $f$ has one place at infinity if $F(u,1,y)$ is analytically irreducible in $\KK[[u,y]]$. Curves with one place at infinity play an important role in affine geometry. In particular, it has been proved in \cite{ab-1} and \cite{a-m-2} that if $f$ has one place at infinity, then so is for $f-\lambda$ for all $\lambda\in\KK$. Also, we can associate with $f$ a numerical semigroup that has some good properties (it is free, and thus a complete intersection and symmetric). By using the arithmetic of the semigroup of a polynomial with one place at infinity and its approximate roots, S.S. Abhyankar and T.T. Moh proved that given two polynomials $x(t),y(t)\in\KK[t]$ with $t$-degrees $n> m$, if $\KK[x(t),y(t)]=\KK[t]$, then  $m$ divides $n$, showing that a coordinate of $\KK^2$ has a unique embedding in $\KK^ 2$. 

The study of polynomials with one place at infinity is also motivated by the plane Jacobian conjecture. Let $g\in\KK[x,y]$. This  conjecture says the following: if the Jacobian $J(f,g)$ is a nonzero constant, then $\KK[x,y]=\KK[f,g]$. If $\KK[x,y]=\KK[f,g]$, then $f$ is equivalent to a coordinate, in particular $f$ has one place at infinity. Hence the plane Jacobian conjecture is equivalent to the following: if $J(f,g)$ is a nonzero constant, then $f$ has one place at infinity. Despite a lot of activities, this conjecture is still open.

Let $f$ be as above and let $g$ be a polynomial of $\KK[x,y]$. Let $C_1$ be the affine curve $g=0$. We say that $C$ is isomorphic to $C_1$ if the ring of coordinates $\KK[C]$ and $\KK[C_1]$ are isomorphic. We say that $C$ is equivalent to $C_1$ if $f=\sigma(g)$ for some automorphism $\sigma$ of $\KK[x,y]$. It is natural to ask which isomorphic curves are equivalent. Curves with one place at infinity, with their good properties, offer a good setting where this question can be studied.

The main aim of this paper is to give an algorithmic approach to the study of curves with one place at infinity, together with an implementation in the semigroup package \texttt{GAP} (\cite{gap}). Our contributions in this direction are the following. For (2) and (3) we make extensive use of the concept of $\delta$-sequence (see Page \pageref{def-delta-seq}).

\begin{enumerate}

\item (Section \ref{sec3}) Given a polynomial $f\in\KK[x,y]$, decide if $f$ has one place at infinity and if yes, compute the set of its approximate roots and the generators of its semigroup. Our approach here is the irreducibility criterion given by S.S. Abhyankar in \cite{ab2}. This criterion is straightforward from the equation. It is based on the notion of generalized Newton polygons and it does not use neither the resolution of singularities nor the calculation of Puiseux series at infinity. We have implemented this procedure using the \texttt{numericalsgps} (\cite{numericalsgps}) \texttt{GAP} (\cite{gap}) package, and named it \texttt{SemigroupOfValuesOfPlaneCurveWithSinglePlaceAtInfinity}.

\item (Section \ref{sec4}) Given a sequence of integers in ${\mathbb N}$, decide if this sequence generates the semigroup of a polynomial at infinity, and if it is the case, calculate an equation of such a polynomial. Our equation allows in particular, using the notion of generalized Newton polygons, the calculation of all polynomials with the given semigroup. The \texttt{GAP} function implemented for this is \texttt{CurveAssociatedToDeltaSequence} (compare with \cite[Algorithm 1]{f-s}).

\item (Section \ref{sec5}) Given a positive integer $g$, compute the set of  semigroups of polynomials with one place at infinity with genus $g$ (hence with conductor $2g$). Note that if $\Gamma$ is such a semigroup and if $f$ is a polynomial with one place at infinity whose semigroup is $\Gamma$, then $g$ is the geometric genus of a nonsingular element of the pencil $(f-\lambda)_{\lambda\in\KK}$, and $2g$ is the rank of the $\KK$-vector space ${\KK[x,y]}/{(f_x,f_y)}$, where $f_x$ (respectively $f_y$) denotes the $x$-derivative (respectively the $y$-derivative) of $f$. The function \texttt{DeltaSequencesWithFrobeniusNumber} has been designed to do this. In \cite{f-s} there is an alternative procedure to compute all $\delta$-sequences with given genus.
\end{enumerate}

All these \texttt{GAP} functions will be included in the next release of the package \texttt{numericalsgps}.

In Section \ref{sec1} we recall the main properties of curves with one place at infinity and we give a proof of Abhyanlar-Moh Theorem: a rational nonsingular one place curve is equivalent to a coordinate of $\KK^2$. In Section \ref{sec2} we explicitly describe the automorphism that transforms the equation of the curve into a coordinate. The method of the proof  allows us to give a simple demonstration of the fact that two equivalent polynomials with one place at infinity have the same $\delta$-sequences. In the last section, we study the embedding of polynomial curves (rational curves with one place at infinity). In particular, we give an example of  a polynomial curve with two inequivalent embeddings in the affine plane.

\section{Semigroup of one place at infinity curves}\label{sec1}

Let $f(x,y)=y^n+a_1(x)y^{n-1}+\dots+a_n(x)$ be a nonzero polynomial of $\KK[x][y]$ and assume, after possibly a change of variables, that for all $1\leq i\leq n$, $\deg_xa_i(x)<i$. Assume that $f$ has one place at infinity. Given a polynomial $g\in\KK[x,y]$, we define the intersection of $f$ with $g$, denoted int$(f,g)$, to be the rank of the $\KK$-vector space ${{\KK[x,y]}/{(f,g)}}$.  Clearly $0=\mathrm{int}(f,1)$ and int$(f,g_1g_2)=\mathrm{int}(f,g_1)+\mathrm{int}(f,g_2)$. 

The set  $\{\mathrm{int}(f,g)\mid g\in \KK[x,y]\setminus (f)\}$ is a subsemigroup of $\NN$. We denote it by $\Gamma(f)$, and we say that $\Gamma(f)$ is the \emph{Abhyankar semigroup} associated to $f$. We also say that a subsemigroup $\Gamma$ of $\mathbb N$ is an Abhyankar semigroup if there exists $f$ such that $\Gamma=\Gamma(f)$ (for basic properties on numerical semigroups,  please, refer to \cite{ns}).

Let $d\in\NN^*$ and assume that $d\mid n$. Let $G$ be a monic polynomial of $\KK[x][y]$ of degree $\frac{n}{d}$ in $y$. Write
$$
f=G^d+\alpha_1(x,y)G^{d-1}+\dots+\alpha_d(x,y),
$$
where $\deg_y\alpha_k(x,y) < \frac{n}{d}$ for all $1\leq k\leq d$. We  say the $G$ is a \emph{$d$th approximate root} of $f$ if $\alpha_1(x,y)=0$. It is well known that a $d$th approximate root exists and it is unique (see \cite{ab-1}). We denote it by $\mathrm{App}_d(f)$.

Assume, after possibly a change of variables, that $a_1(x)=0$. In particular $\mathrm{App}_{n}(f)=y$. Let $r_0=d_1=n$ and  $r_1=m=\deg_xa_n(x)=\mathrm{int}(f,y)$. For all $k\geq 2$, let $d_{k}=\gcd(d_{k-1},r_{k-1})$, $g_k=\mathrm{App}_{d_{k}}(f)$  and $r_k=\mathrm{int}(f,g_k)$. It is well known that there exists $h\in\NN$ such that $d_{h+1}=1$ (see \cite{ab-1}). Thus every Abhyankar semigroup is a numerical semigroup, that is, its complement in $\mathbb N$ is finite. We set $g_1=y=\mathrm{App}_{d_1}(f)$ and $g_{h+1}=f$.  We also set $e_k=\frac{d_k}{d_{k+1}}$ for all $1\leq k\leq h$.

Recall that a numerical semigroup $\Gamma=\langle r_0,\ldots,r_h\rangle$ is a \emph{free} semigroup (for this arrangement of generators, and with the notation of the above paragraph) if $e_k r_k\in \langle r_0,\ldots, r_{k-1}\rangle$ for $k>0$ and $e_k$ is the minimum positive integer with this property; we are not imposing $\{r_0,\ldots,r_h\}$ to be a minimal generating system of $\Gamma$. These semigroups are a special kind of \emph{complete intersections} (semigroups with minimal presentations with the minimal possible cardinality: $h$ with the above notation). Complete intersection numerical semigroups are \emph{symmetric} (see for instance \cite[Chapter 8]{ns}), that is, for every $x\in \mathbb Z\setminus \Gamma$, $\mathrm F(\Gamma)-x\in S$, where $\mathrm F(\Gamma)$ the largest element in $\mathbb{Z}\setminus \Gamma$, called the \emph{Frobenius number} of $S$. When the semigroup is free and $r_0<\cdots <r_h$, we say that it is \emph{telescopic}.

\begin{lema}[\cite{ab-3}]\label{dev-appr}
Let $g\in \KK[x,y]$ and let $B=\left\lbrace \underline{\theta}=(\theta_1,\ldots,\theta_{h+1})\mid \theta_i<e_i \hbox{ for all }i\in\{1,\ldots,h\}\right\rbrace$. Then there exists $B'\subset B$ with finitely many elements such that 
\[g=\sum_{\underline{\theta}\in B'} c_{\underline{\theta}}(x) g_1^{\theta_1}\dots g_h^{\theta_h} f^{\theta_{h+1}}.\]
Moreover,
\begin{enumerate}[(i)]
\item $f\mid g$ if and only if $\theta_{h+1}>1$ for all $\theta\in B'$, 
\item if $f\nmid g$, then there exists a unique $\theta^0 \in B'$ such that $\theta_{h+1}^0=0$ and $\mathrm{int}(f,g)=\deg_xc_{\underline{\theta}^0}(x)r_0+\sum_{i=1}^h\theta_i^0 r_i= \max\{\deg_xc_{\underline{\theta}}(x)r_0+\sum_{i=1}^h\theta_i r_i \mid \underline{\theta}\in B' \}$.
\end{enumerate}
\end{lema}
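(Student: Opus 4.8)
The plan is to read the displayed formula as a generalized $g$-adic (mixed-radix) expansion and then extract $\mathrm{int}(f,g)$ from a valuation argument. First I would establish existence and uniqueness of the expansion. For $\underline\theta\in B$ put $M_{\underline\theta}=g_1^{\theta_1}\cdots g_h^{\theta_h}f^{\theta_{h+1}}$. Each $g_i$ and $f$ is monic in $y$ with $\deg_y g_i=n/d_i$ and $\deg_y f=n$, so $M_{\underline\theta}$ is monic in $y$ of degree $D(\underline\theta)=\sum_{i=1}^h\theta_i(n/d_i)+\theta_{h+1}n$. Since $n/d_i=e_1\cdots e_{i-1}$ and $n=e_1\cdots e_h$, the constraints $0\le\theta_i<e_i$ (for $i\le h$) turn $\underline\theta\mapsto D(\underline\theta)$ into a bijection from $B$ onto $\NN$ (the mixed-radix representation with bases $e_1,\ldots,e_h$ and an unbounded top digit $\theta_{h+1}$). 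Hence the $M_{\underline\theta}$ are monic in $y$ with pairwise distinct $y$-degrees exhausting $\NN$, so they form a basis of $\KK[x,y]$ as a free $\KK[x]$-module; the expansion and its uniqueness then follow by descending induction on $\deg_y g$, subtracting at each step the leading $y$-coefficient (a polynomial in $x$) times the unique $M_{\underline\theta}$ of the same $y$-degree.

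For (i) note that multiplication by $f$ sends $M_{\underline\theta}$ to $M_{\underline\theta+\mathbf e_{h+1}}$ and does not change the first $h$ coordinates, hence preserves the range conditions. Thus if $f\mid g$, expanding $g/f$ and multiplying back by $f$ produces an expansion of $g$ in which every $\theta_{h+1}$ is positive; by uniqueness this is the expansion, so all terms have $\theta_{h+1}\ge1$. The converse is immediate by factoring $f$ out of the sum.

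For (ii), the main point, I would first reduce modulo $f$: since $\mathrm{int}(f,g)$ depends only on the ideal $(f,g)$ and $(f,g)=(f,\tilde g)$ for $\tilde g=\sum_{\theta_{h+1}=0}c_{\underline\theta}(x)M_{\underline\theta}$ (the terms with $\theta_{h+1}\ge1$ lie in $(f)$), we have $\mathrm{int}(f,g)=\mathrm{int}(f,\tilde g)$. The structural input is that $f$ has a single place at infinity: this makes $v:=\mathrm{int}(f,-)$ behave on $\KK[x,y]/(f)$ as the order function of the valuation at that place, so $v$ is additive on products and satisfies $v(a+b)\le\max\{v(a),v(b)\}$ with equality when $v(a)\ne v(b)$; here $v(x)=\mathrm{int}(f,x)=n=r_0$ and $v(g_i)=r_i$. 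Modulo $f$ the $n=e_1\cdots e_h$ monomials $M_{\underline\theta}$ with $\theta_{h+1}=0$ (of $y$-degrees $0,\ldots,n-1$) form a $\KK[x]$-basis of $\KK[x,y]/(f)$, and $v(x^a g_1^{\theta_1}\cdots g_h^{\theta_h})=a\,r_0+\sum_{i=1}^h\theta_i r_i$.

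To finish, invoke freeness of $\Gamma(f)$: uniqueness of the standard representation $s=\lambda_0 r_0+\sum_{i=1}^h\lambda_i r_i$ with $0\le\lambda_i<e_i$ shows that the quantities $w(\underline\theta):=\deg_x c_{\underline\theta}(x)\,r_0+\sum_{i=1}^h\theta_i r_i$ attached to distinct surviving terms are pairwise distinct. Hence $v$ is maximized on $\tilde g$ at a single $\underline\theta^0$ (with $\theta^0_{h+1}=0$), and the equality case of the ultrametric bound gives $\mathrm{int}(f,g)=w(\underline\theta^0)$. I expect the main obstacle to be precisely this last stretch: rigorously justifying that $\mathrm{int}(f,-)$ is a valuation (the only place where the one-place-at-infinity hypothesis is used, and the source of the equality in the ultrametric bound), together with the bookkeeping showing that the maximum relevant to $\mathrm{int}(f,g)$ is attained among the $\theta_{h+1}=0$ terms — the terms with $\theta_{h+1}\ge1$ are annihilated modulo $f$ and so do not contribute, which is what reconciles the value $w(\underline\theta^0)$ with the maximum in the statement.
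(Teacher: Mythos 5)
Your proposal is sound, but note that the paper does not prove this lemma at all: it is stated with the attribution to Abhyankar's Tata lectures \cite{ab-3} and used as a black box, so what you have written is a self-contained reconstruction of the classical argument rather than a variant of anything in the paper. The steps check out. The mixed-radix map $\underline{\theta}\mapsto \deg_y M_{\underline{\theta}}=\theta_1+\theta_2e_1+\dots+\theta_{h+1}e_1\cdots e_h$ is indeed a bijection from $B$ onto $\NN$, so the $M_{\underline{\theta}}$ form a free $\KK[x]$-basis and existence, uniqueness, and item (i) follow exactly as you say (reading the paper's ``$\theta_{h+1}>1$'' in (i) as $\theta_{h+1}\geq 1$, clearly a typo). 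For (ii), the one substantive input you assert rather than prove is that $\mathrm{int}(f,\cdot)=-v_\infty(\cdot)$ on $\KK[x,y]\setminus (f)$, where $v_\infty$ is the valuation at the unique place at infinity of the normalized projective closure; this is exactly where the one-place hypothesis (hence irreducibility of $f$) enters, via the fact that a rational function on the normalization has a divisor of degree zero, so the affine intersection number is balanced by the pole order at the single infinite place. Two refinements are worth recording. First, the pairwise distinctness of the weights $w(\underline{\theta})=\deg_xc_{\underline{\theta}}(x)r_0+\sum_{i=1}^h\theta_ir_i$ does not require freeness of $\Gamma(f)$, which the paper states (with external citation) only \emph{after} this lemma: it follows from the gcd structure alone, since reducing $w(\underline{\theta})$ modulo $d_h$ leaves only $\theta_h r_h$, and $\gcd(r_h,d_h)=1$ together with $0\le\theta_h<e_h=d_h$ pins down $\theta_h$; subtracting and descending determines the remaining coordinates. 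Phrasing it this way removes any appearance of circularity in your appeal to freeness. Second, your restriction of the maximum to the terms with $\theta_{h+1}=0$ is a genuine correction, not mere bookkeeping: as literally written, the maximum over all of $B'$ is false in general --- for $g=y+x^{10}f$ one has $\mathrm{int}(f,g)=r_1$, while the term $\underline{\theta}=(0,\dots,0,1)$ carries weight $10r_0>r_1$ --- so the displayed $\max$ must be taken over $\left\lbrace \underline{\theta}\in B'\mid \theta_{h+1}=0\right\rbrace$, and that corrected statement is precisely what your ultrametric argument establishes.
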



\begin{proposicion} Under the standing hypothesis:
\begin{enumerate}[i)]
\item $\Gamma(f)=\langle r_0,r_1,\dots,r_h\rangle$;
\item for all $1\leq k\leq h$,  $e_kr_k\in \langle r_0,\dots,r_{k-1}\rangle$, that is,  $\Gamma(f)$ is free with respect to the arrangement $(r_0,r_1,\dots,r_h)$;
\item for all $2\leq k\leq h, r_{k-1}d_{k-1} >r_kd_k$;
\item the  Frobenius number of $\Gamma(f)$ is  \[\mathrm F(\Gamma(f))=\left(\sum_{k=1}^h(e_k-1)r_k\right)-r_0.\] 
\end{enumerate}
In particular, the conductor  of $\Gamma(f)$, is  $\mathrm C(\Gamma(f))=\left(\sum_{k=1}^h(e_k-1)r_k\right)-r_0+1$.
\end{proposicion}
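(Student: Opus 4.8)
Throughout I would use that, since $f$ has a single place at infinity, $v(\cdot):=\mathrm{int}(f,\cdot)$ behaves as a valuation on $\KK[x,y]\setminus(f)$: it is additive on products and satisfies $v(g_1+g_2)\ge\min\{v(g_1),v(g_2)\}$, with equality whenever $v(g_1)\ne v(g_2)$. Part (i) is then immediate from Lemma \ref{dev-appr}: for $f\nmid g$ the displayed formula writes $\mathrm{int}(f,g)$ as a nonnegative integer combination of $r_0,\dots,r_h$, so $\Gamma(f)\subseteq\langle r_0,\dots,r_h\rangle$; conversely $r_0=\mathrm{int}(f,x)$ (as $f(0,y)$ is monic of degree $n$) and $r_k=\mathrm{int}(f,g_k)$ by definition, so every generator lies in $\Gamma(f)$.

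For (ii) I would first show, by induction from $d_k=\gcd(d_{k-1},r_{k-1})$, that $d_k=\gcd(r_0,\dots,r_{k-1})$, whence $\langle r_0,\dots,r_{k-1}\rangle\subseteq d_k\NN$. Since $e_k=d_k/\gcd(d_k,r_k)$ is the least positive $e$ with $d_k\mid er_k$, this already makes $e_k$ minimal among positive integers $e$ with $er_k\in\langle r_0,\dots,r_{k-1}\rangle$; what remains is the membership $e_kr_k\in\langle r_0,\dots,r_{k-1}\rangle$. To obtain it I would compare the two monic polynomials $g_k^{e_k}$ and $g_{k+1}$, both of $y$-degree $n/d_{k+1}$, so that $p_k:=g_{k+1}-g_k^{e_k}$ has strictly smaller $y$-degree. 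Applying Lemma \ref{dev-appr} to $g_k^{e_k}$ (which is not a multiple of $f$) yields a unique dominant exponent $\underline\theta^0$ with $e_kr_k=\deg_x c_{\underline\theta^0}\,r_0+\sum_{i=1}^h\theta_i^0 r_i$ and $\theta_i^0<e_i$, and the membership reduces to showing $\theta_i^0=0$ for every $i\ge k$.

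That vanishing is precisely where (iii) is needed, and I expect it to be the main obstacle. Membership in (ii) and the inequality (iii) are two aspects of one structural fact, special to the normalization $\deg_x a_i<i$: that $v(g_{k+1})=r_{k+1}<e_kr_k=v(g_k^{e_k})$, equivalently $r_{k+1}d_{k+1}<r_kd_k$. The delicate point is that valuation theory alone does not fix the sign of this inequality — for a local branch it reverses — so the one-place-at-infinity hypothesis must enter essentially; I would extract it from the approximate-root theory of \cite{ab-1,ab-3}, namely that the leading form of $g_{k+1}$ at $p$ equals $g_k^{e_k}$ balanced against a monomial in $x,g_1,\dots,g_{k-1}$ of the same value. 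Granting $v(g_{k+1})<v(g_k^{e_k})$, the ultrametric equality applied to $g_k^{e_k}=g_{k+1}-p_k$ forces the dominant term of $g_k^{e_k}$ onto indices $<k$, which simultaneously proves the membership in (ii) and the strict inequality $r_{k+1}d_{k+1}<r_kd_k$ of (iii).

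Part (iv) is then formal. Freeness provides a unique standard representation: because $e_ir_i\in\langle r_0,\dots,r_{i-1}\rangle$, reducing from the top shows every integer can be written uniquely as $s=\lambda_0r_0+\sum_{i=1}^h\lambda_ir_i$ with $\lambda_0\in\mathbb Z$ and $0\le\lambda_i<e_i$, and $s\in\Gamma(f)$ exactly when $\lambda_0\ge0$. An integer therefore fails to belong to $\Gamma(f)$ precisely when its forced representation requires $\lambda_0<0$, and the largest such integer is given by $\lambda_i=e_i-1$ for $1\le i\le h$ and $\lambda_0=-1$, yielding $\mathrm F(\Gamma(f))=\sum_{k=1}^h(e_k-1)r_k-r_0$. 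The conductor statement follows from $\mathrm C(\Gamma(f))=\mathrm F(\Gamma(f))+1$.
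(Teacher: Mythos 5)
The paper offers no argument for this proposition: its ``proof'' is the single line ``See \cite{ab-1, a-m-1, a-m-2}''. Your proposal is therefore genuinely different in kind — you reconstruct the standard derivation rather than cite it — and the reconstruction is largely sound. Parts (i) and (iv) are indeed formal: (i) follows from Lemma \ref{dev-appr} together with $r_0=\mathrm{int}(f,x)$ (since $f(0,y)$ is monic of degree $n$) and $r_k=\mathrm{int}(f,g_k)$; and (iv) is the standard Ap\'ery-set computation for free semigroups (unique representation $s=\lambda_0r_0+\sum\lambda_ir_i$ with $0\le\lambda_i<e_i$, membership iff $\lambda_0\ge 0$; cf.\ \cite[Chapter 8]{ns}). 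You also correctly locate the genuine content in (ii)--(iii), namely the inequality $\mathrm{int}(f,g_{k+1})=r_{k+1}<e_kr_k=\mathrm{int}(f,g_k^{e_k})$, and you defer exactly that to \cite{ab-1,ab-3} — which is to say, to the same sources the paper itself invokes for the whole proposition. So your net position is the same as the paper's (the hard core is outsourced to Abhyankar--Moh), but with the added value of exhibiting how everything else follows from Lemma \ref{dev-appr}, and of making explicit that (ii) and (iii) must be proved jointly, by induction on $k$, since your membership argument for (ii) presupposes the inequality of (iii) at the same index.

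Two local corrections. First, your stated ultrametric axiom has the wrong orientation: $\mathrm{int}(f,\cdot)$ is degree-like, not valuation-like — Lemma \ref{dev-appr}(ii) computes it as a \emph{maximum} over the terms of the expansion — so the correct inequality is $\mathrm{int}(f,g_1+g_2)\le\max\{\mathrm{int}(f,g_1),\mathrm{int}(f,g_2)\}$, with equality when the two values differ. This matters for your key step: writing $p_k=g_k^{e_k}-g_{k+1}$, the max-convention gives $\mathrm{int}(f,p_k)=e_kr_k$ as you need, whereas your min-convention would yield $r_{k+1}$ and the argument would collapse. Second, from $\mathrm{int}(f,p_k)=e_kr_k$ to the membership $e_kr_k\in\langle r_0,\dots,r_{k-1}\rangle$ you still need two steps you only gesture at: since $\deg_y p_k<n/d_{k+1}$ and the $\underline{G}$-adic expansion is unique with $\theta_i<e_i$, every term of $p_k$ has $\theta_j=0$ for $j>k$ (any term with $\theta_j\ge 1$, $j\ge k+1$, has $y$-degree at least $n/d_j\ge n/d_{k+1}$, while $\sum_{i\le k}(e_i-1)\frac{n}{d_i}<\frac{n}{d_{k+1}}$); then reducing the dominant value $e_kr_k=\deg_xc_{\underline{\theta}^0}\,r_0+\sum_{i=1}^{k}\theta_i^0r_i$ modulo $d_k$ and using $\gcd\bigl(e_k,\frac{r_k}{d_{k+1}}\bigr)=1$ forces $e_k\mid\theta_k^0$, hence $\theta_k^0=0$. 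With these repairs your outline is a correct proof modulo the cited input.
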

\begin{proof} See \cite{ab-1, a-m-1, a-m-2}.
\end{proof}

Let $f_x,f_y$ denote the partial derivatives of $f$.  We have the following.
 
\begin{lema}\label{form-c}
Define $\mu(f)=\mathrm{rank}_{\KK}({\KK[x,y]}/ {(f_x,f_y)})$.
\begin{enumerate}[i)]
\item $\mu(f)=\mathrm C(\Gamma(f))=(\sum_{k=1}^h(e_k-1)r_k)-r_0+1$.
 
\item $\mathrm{int}(f,f_y)=\mu(f)+r_0-1=\sum_{k=1}^h(e_k-1)r_k$.
\end{enumerate}
\end{lema}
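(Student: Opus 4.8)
```latex
\textbf{Proof proposal.}
The plan is to prove both parts by relating the intersection numbers of $f$ with its partial derivatives to the data of the $\delta$-sequence, and then invoking the formula for the conductor already established in the Proposition.

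First I would establish part (ii), since it feeds into part (i). The key observation is that $f_y=\frac{\partial f}{\partial y}$ is a polynomial of $y$-degree $n-1$, and I would compute $\mathrm{int}(f,f_y)$ using Lemma \ref{dev-appr}. The natural approach is to show that the $\delta$-sequence expansion of $f_y$ has its leading term (in the sense of the valuation described in part (ii) of Lemma \ref{dev-appr}) governed by differentiating the canonical expression $f=g_h^{e_h}+\cdots$; more concretely, I expect that the dominant contribution comes from the term $g_1^{e_1-1}g_2^{e_2-1}\cdots g_h^{e_h-1}$, whose value under the semigroup valuation is exactly $\sum_{k=1}^h(e_k-1)r_k$. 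To make this rigorous I would argue that each approximate root $g_k$ has $\mathrm{int}(f,g_k)=r_k$ and that the product of the $g_k^{e_k-1}$ realizes the maximum in the formula of Lemma \ref{dev-appr}(ii), so that $\mathrm{int}(f,f_y)=\sum_{k=1}^h(e_k-1)r_k$.

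Next, for part (i), I would use the classical relationship between $\mu(f)$, the colength of the Jacobian ideal, and the conductor of the value semigroup. The cleanest route is to combine part (ii) with the identity $\mathrm{int}(f,f_y)=\mu(f)+r_0-1$, which itself follows from comparing the $\KK$-dimensions of $\KK[x,y]/(f_x,f_y)$ and $\KK[x,y]/(f,f_y)$: the difference is controlled by the degree $r_0=n$ coming from the $x$-degree shift when passing from $f_x$ to $f$ in the ideal. Once that identity is in hand, part (i) is immediate because the Proposition gives $\mathrm C(\Gamma(f))=\left(\sum_{k=1}^h(e_k-1)r_k\right)-r_0+1$, so that $\mu(f)=\mathrm{int}(f,f_y)-r_0+1=\left(\sum_{k=1}^h(e_k-1)r_k\right)-r_0+1=\mathrm C(\Gamma(f))$.

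The main obstacle I anticipate is the careful bookkeeping in part (ii): establishing that differentiating $f$ with respect to $y$ does not decrease the valuation below $\sum_{k=1}^h(e_k-1)r_k$ and that no cancellation of leading terms occurs, which requires using the one-place-at-infinity hypothesis and the freeness of $\Gamma(f)$ to control the expansion. In particular I would need to verify that the term of maximal value in $f_y$ is unique and survives differentiation, exploiting that $\mathrm{char}(\KK)=0$ so that the exponents $e_k$ remain nonzero upon differentiation. The passage between the Jacobian ideal $(f_x,f_y)$ and $(f,f_y)$ in part (i) is more standard but still needs the fact that $\Gamma(f)$ is symmetric (equivalently, $f$ has one place at infinity) to guarantee the expected colength behaviour.
```
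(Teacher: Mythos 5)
The paper offers no argument for this lemma at all---its proof is the citation ``See \cite{ab-1,a-2}''---so your proposal must be measured against the standard proofs in those references, and it falls short of them at two points of very different severity. For the equality $\mathrm{int}(f,f_y)=\sum_{k=1}^h(e_k-1)r_k$ in (ii), your route through Lemma \ref{dev-appr} is plausible in outline, and one worry you raise is in fact no worry at all: by freeness, distinct admissible exponents $\underline{\theta}$ (together with $\deg_x c_{\underline{\theta}}$) give distinct values $\underline{\theta}\cdot\underline{r}$, so leading terms of different basis monomials can never cancel. The real difficulty sits where you wave at ``bookkeeping'': differentiating the $g_k$-adic expansion produces terms $\partial_y\alpha_i\cdot g_k^{e_k-i}$, and there is no a priori inequality relating $\mathrm{int}(f,\partial_y\alpha_i)$ to $\mathrm{int}(f,\alpha_i)$, so neither the dominance of $g_1^{e_1-1}\cdots g_h^{e_h-1}$ nor the nonvanishing of its coefficient follows from what you wrote. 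The classical way to close this (Abhyankar) is not expansion but factorization at the single place at infinity: writing $f_y=n\prod_{j=1}^{n-1}\bigl(y-y_j(\tau)\bigr)$ over the remaining Newton--Puiseux roots, exactly $d_k-d_{k+1}$ conjugates have contact of level $k$, each contributing $r_k/d_{k+1}$, and $\sum_{k=1}^h (d_k-d_{k+1})\,r_k/d_{k+1}=\sum_{k=1}^h(e_k-1)r_k$. That idea, or an induction on approximate roots doing equivalent work, is missing from your sketch.

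The serious gap is in your derivation of (i). The identity $\mathrm{int}(f,f_y)=\mu(f)+r_0-1$ cannot be obtained by ``comparing the $\KK$-dimensions of $\KK[x,y]/(f_x,f_y)$ and $\KK[x,y]/(f,f_y)$'' with an $x$-degree shift: these ideals are not related by any elementary filtration or exact sequence, and the claimed comparison is exactly equivalent to the assertion $\mu(f)=\mathrm C(\Gamma(f))$ that you are trying to prove---so your proof of (i) is circular at precisely its crucial step (note also that this identity is itself half of statement (ii), not an auxiliary fact). The argument in the cited sources is global and topological, not ideal-theoretic: by Abhyankar--Moh \cite{a-m-2}, every fiber $f-\lambda$ has one place at infinity with the same characteristic sequences, hence $\mathrm{int}(f-\lambda,f_y)=\sum_{k=1}^h(e_k-1)r_k$ for \emph{all} $\lambda$, while $\mu(f)=\sum_{\lambda}\mu(f-\lambda)$; one then evaluates $\mathrm{int}(f-\lambda,f_y)$ on a generic smooth fiber via Riemann--Hurwitz for the $x$-projection, using that the genus of that fiber equals the number of gaps of $\Gamma(f)$, which is $\mathrm C(\Gamma(f))/2$ by symmetry. (This is the computation echoed at the end of Section \ref{sec5} of the paper, where $\mu+\mu_\infty=(r_0-1)(r_0-2)$.) Your appeal to symmetry ``to guarantee the expected colength behaviour'' gestures at this but supplies neither the equisingularity of the pencil nor the genus computation, which are the actual content of part (i).
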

\begin{proof}
See \cite{ab-1,a-2}.
\end{proof}

\begin{proposicion}\label{recursive-appr}
Let the notations be as above. For all $1\leq k\leq h$, $g_k=\mathrm{App}_{d_k}(f)$ is a polynomial with one place at infinity. Furthermore, the following conditions hold.
\begin{enumerate}[i)]
\item $\Gamma(g_k)=\langle{r_0/ d_{k}},\dots,{r_{k-1}/ d_k}\rangle$.

\item  $\mu(f)=d_k\mu(g_k)+(\sum_{i=k}^h(e_i-1)r_i)-d_k+1$.

\item $\lbrace g_1,\dots,g_{k-1}\rbrace$ is the set of approximate roots of $g_k$.
\end{enumerate}
\end{proposicion}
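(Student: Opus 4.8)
The plan is to treat $g_k=\mathrm{App}_{d_k}(f)$ as a one-place curve in its own right and to transport data between $f$ and $g_k$ through intersection multiplicities, so that the structure theorems already stated for $f$ can be applied to $g_k$ and then translated back. The two structural inputs I would secure first are that $g_k$ has one place at infinity --- which I would obtain by citing Abhyankar's approximate-root theory \cite{ab-1} or by checking Abhyankar's irreducibility criterion on the generalized Newton polygon of $g_k$, and which in particular makes $g_k$ irreducible so that $\mathrm{int}(g_k,\cdot)$ is additive on products --- and the purely algebraic transitivity of approximate roots, $\mathrm{App}_{e}(\mathrm{App}_{d}(f))=\mathrm{App}_{de}(f)$ whenever $de\mid n$. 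Granting one-placeness, the Proposition on free semigroups applies to $g_k$: with $\rho_0=\deg_y g_k=n/d_k$ and $\rho_j=\mathrm{int}(g_k,g_j)$, the semigroup $\Gamma(g_k)$ is free on $\rho_0,\rho_1,\dots,\rho_{k-1}$.

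The crux, on which $(i)$ and $(iii)$ both rest, is the scaling relation \[\mathrm{int}(f,g)=d_k\,\mathrm{int}(g_k,g)\qquad\text{for every }g\text{ with }\deg_y g<n/d_k,\] whose case $g=g_j$ reads $\rho_j=\mathrm{int}(g_k,g_j)=\mathrm{int}(f,g_j)/d_k=r_j/d_k$ and thus gives $(i)$ directly. I would prove it by induction on $j$ using the approximate-root relations: for each $j$ there is a polynomial identity $g_j^{e_j}=c\,x^{\ell_0}g_1^{\ell_1}\cdots g_{j-1}^{\ell_{j-1}}+(\text{terms of smaller }\mathrm{int}(f,\cdot))$ with $c\in\KK^*$ and $e_jr_j=\ell_0 r_0+\sum_{i\geq 1}\ell_i r_i$, coming from Lemma \ref{dev-appr} and the freeness of $\Gamma(f)$; since $\mathrm{int}(g_k,x)=\deg_y g_k=r_0/d_k$ and, by the inductive hypothesis, $\mathrm{int}(g_k,g_i)=r_i/d_k$ for $i<j$, the two valuations $\mathrm{int}(f,\cdot)$ and $\mathrm{int}(g_k,\cdot)$ are proportional with ratio $d_k$ on every monomial occurring, so the same term dominates and evaluating both sides yields $e_j\rho_j=e_jr_j/d_k$. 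Alternatively the scaling relation can be read off from the place at infinity: the branch of $g_k$ is a $d_k$-fold reparametrization ($\sigma=\tau^{d_k}$, matching $x=\tau^{-n}$ with $x=\sigma^{-n/d_k}$) of the truncation of the branch of $f$ seen by polynomials of $y$-degree below $n/d_k$. Either way, once $(i)$ holds the $d$-sequence of $g_k$ is $(d_j/d_k)_{j=1}^{k}$, and transitivity gives $\mathrm{App}_{d_j/d_k}(g_k)=\mathrm{App}_{d_j}(f)=g_j$, which is exactly $(iii)$.

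I expect this scaling step to be the only genuine obstacle; the delicate point is the control of the truncation --- that a polynomial of $y$-degree below $n/d_k$ meets only the part of the place of $f$ that $g_k$ already reproduces, equivalently that no higher approximate root interferes in the dominant term --- which is precisely where the one-place hypothesis does real work. Part $(ii)$ is then a bookkeeping computation: from the $d$-sequence $(d_j/d_k)_j$ the multiplicities of $g_k$ are $e_j^{(g_k)}=e_j$ for $1\le j\le k-1$, so Lemma \ref{form-c}(i) together with $(i)$ gives $\mu(g_k)=\sum_{j=1}^{k-1}(e_j-1)(r_j/d_k)-r_0/d_k+1$; multiplying by $d_k$ and adding $\sum_{i=k}^h(e_i-1)r_i-d_k+1$ makes the constant terms cancel and recovers $\sum_{i=1}^h(e_i-1)r_i-r_0+1=\mu(f)$, again by Lemma \ref{form-c}(i).
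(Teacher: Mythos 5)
The paper offers no argument here at all: its proof of Proposition \ref{recursive-appr} is the single line ``See \cite{ab-1}''. So your proposal is necessarily a different route --- it reconstructs the Abhyankar--Moh argument that the citation points to, and most of the reconstruction is sound. Your claim that transitivity of approximate roots is purely algebraic is correct: $\mathrm{App}_d(f)$ is characterized as the unique monic $G$ of $y$-degree $\frac{n}{d}$ with $\deg_y(f-G^d)<n-\frac{n}{d}$, and if $H=\mathrm{App}_e(G)$ then $\deg_y(G^d-H^{de})\leq \deg_y(G-H^e)+(d-1)\frac{n}{d}<n-\frac{n}{de}$, whence $H=\mathrm{App}_{de}(f)$; combined with (i) this does yield (iii), since the gcd-sequence of $(r_0/d_k,\dots,r_{k-1}/d_k)$ is $(d_j/d_k)_j$. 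The bookkeeping in (ii) is also exact: the multiplicities $e_1,\dots,e_{k-1}$ are unchanged, so Lemma \ref{form-c} gives $d_k\mu(g_k)=\sum_{j=1}^{k-1}(e_j-1)r_j-r_0+d_k$, and adding $\sum_{i=k}^h(e_i-1)r_i-d_k+1$ recovers $\mu(f)$. And you correctly locate the crux in the scaling relation $\mathrm{int}(f,g)=d_k\,\mathrm{int}(g_k,g)$ for $\deg_y g<n/d_k$.

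The gap is that your primary induction for that scaling relation does not close as stated. The $(g_1,\dots,g_h,f)$-adic expansion of $g_j^{e_j}$ --- a monic polynomial of $y$-degree $n/d_{j+1}$ --- necessarily contains the term $g_{j+1}$ with coefficient $1$ (from $g_j^{e_j}=g_{j+1}-\alpha_2g_j^{e_j-2}-\dots-\alpha_{e_j}$), so for $j<k-1$ the identity you evaluate under $\mathrm{int}(g_k,\cdot)$ involves the value $\rho_{j+1}$, which an upward induction on $j$ has not yet determined; the equations for $\rho_1,\dots,\rho_{k-1}$ are coupled, and ``the same term dominates'' cannot be asserted monomial-by-monomial without already knowing the unknown values. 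Equivalently, invoking Lemma \ref{dev-appr} for $g_k$, or the uniqueness of its dominant exponent, presupposes one-placeness of $g_k$ and its semigroup --- the very content of (i). This is repairable by a simultaneous induction with jointly established ultrametric estimates, or by your alternative truncation argument; but the statement that the branch of $g_k$ at infinity is the $d_k$-fold reparametrized truncation of that of $f$ is precisely the Abhyankar--Moh theorem on approximate roots, i.e., the content of \cite{ab-1}, which is also where you fall back for one-placeness of $g_k$ (note that verifying the irreducibility criterion of Section \ref{sec3} for $g_k$ would require its straightness data, essentially what is being proved). So your sketch is a faithful map of the standard proof and honest about where the weight sits, but at exactly that step it either circles or defers to the same source the paper cites.
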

\begin{proof}
See \cite{ab-1}.
\end{proof}
 
\begin{example}
Let $f=((y^3-x^2)^2-xy^2)^4-(y^3-x^2)$.
\begin{verbatim}
gap> f:=((y^3-x^2)^2-x*y^2)^4-(y^3-x^2);;
gap> SemigroupOfValuesOfPlaneCurveWithSinglePlaceAtInfinity(f,"all");
[ [ 24, 16, 28, 7 ], [ y, y^3-x^2, y^6-2*x^2*y^3+x^4-x*y^2 ] ]
gap> g:=last[2][3];
y^6-2*x^2*y^3+x^4-x*y^2
gap> SemigroupOfValuesOfPlaneCurveWithSinglePlaceAtInfinity(g,"all");
[ [ 6, 4, 7 ], [ y, y^3-x^2 ] ]
gap> s:=SemigroupOfValuesOfPlaneCurveWithSinglePlaceAtInfinity(f);
<Numerical semigroup with 4 generators>
gap> FrobeniusNumber(s);
57
\end{verbatim}
\end{example}
 
\begin{corolario}\label{muf-mugk} 
Let the notations be as above.  We have $\mu(f)=0$ if and only if  $\mu(g_k)=0$ and $r_i=d_{i+1}$ for all $k\leq i\leq h$.
\end{corolario}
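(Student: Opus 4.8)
The plan is to read everything off the additivity formula in Proposition~\ref{recursive-appr}(ii). Setting
\[
T_k = \Big(\sum_{i=k}^h (e_i-1)r_i\Big) - d_k + 1,
\]
that proposition reads $\mu(f) = d_k\,\mu(g_k) + T_k$. Since $\mu(g_k)$ is the rank of a $\KK$-vector space (equivalently, by Lemma~\ref{form-c}, the conductor $\mathrm C(\Gamma(g_k))$), we have $\mu(g_k)\ge 0$; together with $d_k\ge 1$ this gives $d_k\mu(g_k)\ge 0$. So if I can also show $T_k\ge 0$, then $\mu(f)=0$ is equivalent to both summands vanishing, i.e. $\mu(g_k)=0$ (as $d_k\ge 1$) together with $T_k=0$. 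The whole statement thus reduces to proving $T_k\ge 0$ and identifying exactly when $T_k=0$.

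To analyze $T_k$ I would set up a downward recursion. A direct subtraction, using $d_k-d_{k+1}=(e_k-1)d_{k+1}$ (just $e_kd_{k+1}=d_k$ rewritten), yields
\[
T_k - T_{k+1} = (e_k-1)r_k - (d_k-d_{k+1}) = (e_k-1)\,(r_k-d_{k+1}).
\]
Starting from $T_{h+1}=0$ (empty sum and $d_{h+1}=1$), this telescopes to
\[
T_k = \sum_{j=k}^h (e_j-1)\,(r_j-d_{j+1}).
\]
Each factor is nonnegative: $e_j\ge 1$ because $d_{j+1}\mid d_j$, and $r_j-d_{j+1}\ge 0$ because $d_{j+1}=\gcd(d_j,r_j)$ divides $r_j$. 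Hence $T_k\ge 0$, which secures the nonnegativity half of the argument, and if each $r_j=d_{j+1}$ then plainly $T_k=0$.

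The delicate point, and the one I expect to be the real obstacle, is pinning down equality. From the telescoped expression, $T_k=0$ holds iff for every $j\in\{k,\dots,h\}$ one has $e_j=1$ \emph{or} $r_j=d_{j+1}$, whereas the claim asserts the cleaner $r_j=d_{j+1}$ throughout. To close this gap I would rule out $e_j=1$. With $h$ chosen minimal, the nonincreasing sequence $d_1\ge d_2\ge\cdots$ absorbs at $1$, so $d_{h+1}=1$ forces $d_k>1$ for all $k\le h$. Now if $e_j=1$ for some $j\le h$, then $d_{j+1}=d_j$, hence $g_{j+1}=\mathrm{App}_{d_{j+1}}(f)=\mathrm{App}_{d_j}(f)=g_j$, so $r_{j+1}=r_j$ and $d_{j+2}=\gcd(d_{j+1},r_{j+1})=d_{j+1}$; iterating, the $d$-sequence would remain constant at $d_j>1$ and never reach $d_{h+1}=1$, a contradiction. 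Therefore $e_j\ge 2$ for all $j\le h$, and $(e_j-1)(r_j-d_{j+1})=0$ forces $r_j=d_{j+1}$. This gives $T_k=0\iff r_i=d_{i+1}$ for all $k\le i\le h$.

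Combining the pieces finishes the proof: $\mu(f)=d_k\mu(g_k)+T_k$ with both summands nonnegative, $d_k\mu(g_k)=0\iff\mu(g_k)=0$, and $T_k=0\iff r_i=d_{i+1}$ for $k\le i\le h$, which is exactly the claimed equivalence.
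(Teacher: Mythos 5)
Your proof is correct and follows essentially the same route as the paper's: both rest on the decomposition $\mu(f)=d_k\mu(g_k)+T_k$ from Proposition~\ref{recursive-appr}(ii), the nonnegativity of both summands via the termwise bound $(e_i-1)r_i\ge d_i-d_{i+1}$ (your factorization $(e_i-1)(r_i-d_{i+1})\ge 0$ is exactly this inequality), and the telescoping $\sum_{i=k}^h(d_i-d_{i+1})=d_k-1$. The only differences are improvements of presentation on your side: you extract the equality case uniformly from the per-term factorization, where the paper runs a downward induction from $i=h$ with step-by-step substitutions, and you supply a proof that $e_j>1$ for all $j\le h$, which the paper asserts without argument.
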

\begin{proof}
For all $i\geq k$ we have $e_i>1$ and $r_i\geq d_{i+1}\geq 1$, hence  $(e_i-1)r_i\geq d_i-d_{i+1}$. In particular, $\left(\sum_{i=k}^h(e_i-1)r_i\right)-d_k+1\geq \left(\sum_{i=k}^h(d_i-d_{i+1})\right)-d_k+1=0$. If $\mu(f)=0$, then $\mu(g_k)=\left(\sum_{i=k}^h(e_i-1)r_i\right)-d_k+1=0$. For $k=h$, $(e_h-1)r_h-d_h+1=0=(d_h-1)(r_h-1)$. As $d_h\neq1$, $r_h=1=d_{h+1}$. Now for $k=h-1$, $(e_{h-1}-1)r_{h-1}+(e_h-1)r_h-d_{h-1}+1=0$. We use the case $k=h$, and we obtain $(e_{h-1}-1)r_{h-1}-(d_{h-1}-d_h)$. But $e_{h-1}-1=(d_{h-1}-d_h)/d_h$, and so $(d_{h-1}-d_h)\left( \frac{r_{h-1}}{d_h} -1\right)=0$. We conclude $r_{h-1}=d_{h}$. And we keep reasoning in this way, and we finally derive $r_i=\displaystyle{d_{i+1}}$ for all $i\in \{k,\dots,h\}$. 

The converse is obvious.
\end{proof}

\begin{corolario} \label{cor-mu-f-0}
Let the notations be as above.  The following conditions are equivalent.
\begin{enumerate}[(i)]
\item $\mu(f)=0$.
\item $r_i=\displaystyle{d_{i+1}}$ for all $1\leq i\leq h$.
\item  $0=\mathrm C(\Gamma(f))$.
\item There exist $x(t),y(t)\in\KK[t]$ such that $f(x(t),y(t))=0$ and  $\KK[x(t),y(t)]=\KK[t]$.
\end{enumerate}
Furthermore, each of the conditions above implies that $m$ divides $n$.
\end{corolario}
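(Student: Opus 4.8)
The plan is to dispatch the purely arithmetic equivalences (i)--(iii) from the lemmas already at hand, then treat the geometric condition (iv), and finally read off $m\mid n$.

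The equivalence (i) $\Leftrightarrow$ (iii) is immediate: Lemma \ref{form-c}(i) states exactly $\mu(f)=\mathrm C(\Gamma(f))$, so one vanishes iff the other does. For (i) $\Leftrightarrow$ (ii) I would specialize Corollary \ref{muf-mugk} to $k=1$. Since $g_1=y$ and $\mu(y)=\mathrm{rank}_\KK\big(\KK[x,y]/(0,1)\big)=0$, the condition ``$\mu(g_1)=0$'' is automatic, and the corollary collapses to ``$\mu(f)=0$ if and only if $r_i=d_{i+1}$ for all $1\le i\le h$'', which is precisely (ii). As a cross-check, the implication (ii) $\Rightarrow$ (i) also falls out of the conductor formula directly: if $r_i=d_{i+1}$ then $(e_i-1)r_i=(d_i/d_{i+1}-1)d_{i+1}=d_i-d_{i+1}$, so $\sum_{i=1}^h(e_i-1)r_i$ telescopes to $d_1-d_{h+1}=n-1=r_0-1$, and hence $\mathrm C(\Gamma(f))=0$.

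The substantive part is (i) $\Leftrightarrow$ (iv); I would carry it out against the dictionary $\mu(f)=0 \Leftrightarrow \mathrm C(\Gamma(f))=0 \Leftrightarrow \Gamma(f)=\NN$ supplied by Lemma \ref{form-c}(i). For (iv) $\Rightarrow$ (i): a parametrization with $\KK[x(t),y(t)]=\KK[t]$ makes the substitution homomorphism $\KK[x,y]\to\KK[t]$, $x\mapsto x(t)$, $y\mapsto y(t)$, surjective; its kernel is a height-one prime containing the prime $(f)$ (here $f$ is irreducible because it has one place at infinity), so the kernel equals $(f)$ and $\KK[C]\cong\KK[t]$. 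The single place at infinity corresponds to $t=\infty$, so for $g\notin(f)$ the intersection number is computed by $\mathrm{int}(f,g)=\deg_t g(x(t),y(t))$; as $g(x(t),y(t))$ ranges over $\KK[x(t),y(t)]=\KK[t]$ these degrees exhaust $\NN$, giving $\Gamma(f)=\NN$ and thus $\mu(f)=0$. For the converse (i) $\Rightarrow$ (iv): $\Gamma(f)=\NN$ means the conductor, and hence the total delta-invariant, vanishes, so $C$ is a smooth rational curve with a single point at infinity; its normalization is $\mathbb{A}^1$ and, by smoothness, $\KK[C]$ already equals its integral closure $\KK[t]$, which exhibits the desired $x(t),y(t)$ with $f(x(t),y(t))=0$ and $\KK[x(t),y(t)]=\KK[t]$.

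Finally, $m\mid n$ follows once (ii) is in hand: the case $i=1$ reads $m=r_1=d_2=\gcd(d_1,r_1)=\gcd(n,m)$, and $\gcd(n,m)=m$ is the same as $m\mid n$. I expect the only genuinely delicate step to be the geometric translation in (i) $\Leftrightarrow$ (iv)---namely justifying that $\mathrm{int}(f,-)$ is read off by $\deg_t$ along a proper parametrization at infinity, and that a vanishing conductor really forces the normalization $\mathbb{A}^1$ to be reached by an honest polynomial parametrization. Everything else reduces to the conductor formula of Lemma \ref{form-c} and Corollary \ref{muf-mugk}, together with a one-line gcd computation.
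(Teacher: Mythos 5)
Your proposal is correct, and its skeleton matches the paper's: (i)$\Leftrightarrow$(iii) is immediate from Lemma \ref{form-c}(i), (i)$\Leftrightarrow$(ii) is Corollary \ref{muf-mugk} specialized to $k=1$ (the paper invokes it the same way; your telescoping cross-check for (ii)$\Rightarrow$(i) is a nice redundancy), and $m\mid n$ comes from $m=r_1=d_2=\gcd(n,m)$ exactly as in the paper. The one place where you genuinely diverge is the implication toward (iv). The paper proves (iii)$\Rightarrow$(iv) by citing Abhyankar's result that a rational curve with one place at infinity admits a polynomial parametrization, and then upgrades it to a \emph{proper} parametrization via the $1\in\Gamma(f)$ trick: some $p$ satisfies $\deg_t p(x(t),y(t))=1$, so $at+b\in\KK[x(t),y(t)]$ and hence $\KK[x(t),y(t)]=\KK[t]$. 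You instead argue via normality: $\mu(f)=0$ forces $(f_x,f_y)=(1)$, so $C$ is smooth in the affine plane and $\KK[C]$ is integrally closed; rationality plus the single place at infinity identifies the integral closure of $\KK[C]$ with $\KK[t]$, so $\KK[C]=\KK[t]$ outright. Your route is more self-contained (it avoids the external citation and the degree-one trick) but shifts the load onto two standard facts you should make explicit: that the gaps/conductor of $\Gamma(f)$ control both the geometric genus and the affine delta-invariants (this is what justifies ``total delta-invariant vanishes''), and that the integral closure of the coordinate ring of an affine curve with one place at infinity is the polynomial ring of its normalization. The paper's own proof is no more detailed on the corresponding points (it asserts without proof that genus of $\Gamma(f)$ zero forces the curve rational), so your compression is at the same level of rigor. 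Your (iv)$\Rightarrow$(i) is also a slight strengthening of the paper's one-line (iv)$\Rightarrow$(iii) ($t=p(x(t),y(t))$ gives $1\in\Gamma(f)$): you justify that the kernel of the substitution map is $(f)$ and that $\mathrm{int}(f,g)=\deg_t g(x(t),y(t))$, facts the paper uses only parenthetically; note that under hypothesis (iv) this identification needs no analysis at infinity at all, since $\KK[x,y]/(f,g)\cong\KK[t]/(g(x(t),y(t)))$ directly.
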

  
\begin{proof} \emph{(i) equivalent to (ii)} follows from  Corollary \ref{muf-mugk}.

\emph{(i) implies (iii)} is  obvious by Lemma \ref{form-c}.

\emph{(iii) implies (iv)} Since $\mu(f)=0$, the genus of $\Gamma(f)=0$. This forces the genus of the curve $C=\mathrm V(f)$ to be zero and consequently $f$ is rational. 
Also, since $f$ has one place at infinity, $f$ can be parametrized by polynomials (see \cite{ab-n}). Now $1\in\Gamma(f)$, and so there exists $p\in K[x,y]$ such that $\mathrm{rank}_\KK (\KK[x,y]/(f,p))=1=\deg_t p(x(t),y(t))$ (one can prove this by using the isomorphism between $\KK[x,y]/(f)$ and $\KK[(x(y),y(t)]$).  If follows that $at+b=p(x(t),y(t))$ for some $a\in\KK^*$, $b\in\KK$. Hence $\KK[at+b]=\KK[t]\subseteq \KK[x(t),y(t)]\subseteq K[t]$. In particular $\KK[t]= \KK[x(t),y(t)]$.

\emph{(iv) implies (iii)} $t=p(x(t),y(t))$ for some $p(x,y)\in\KK[x,y]$, whence $1\in\Gamma(f)$.

\noindent Now assume that Condition (ii) holds. We have $r_1=m=d_2$. Thus $m$ divides $n$.
\end{proof}

\section{Reduced forms of one place curves}\label{sec2}

Let  $f=y^n+a_2(x)y^{n-2}+\dots+a_n(x)$ be a polynomial with one place at infinity and assume that $n > m=\deg_xa_n(x)$. In particular this implies that $\deg_x a_i(x)<i$ for all $i\in\{1,\ldots,n\}$, because otherwise the Newton polygon associated to $f$ would have more than one edge, and consequently $f$ would have more than one place at infinity (see \cite[Chapter 4]{ab-3}).

Let $\underline{r}=(r_0,\dots,r_h), \underline{d}=(d_1,\dots,d_h, d_{h+1}=1), \underline{e}=(e_1,\dots,e_h)$ be the set of characteristic sequences associated with $f$ as in Section \ref{sec1},  and let $g_1=y,g_2,\dots,g_h,g_{h+1}=f$ be the set of approximate roots of $f$.

\begin{nota}[Generalized Newton polygons, see \cite{ab2}]\label{nota21} Let $1\leq k\leq h$ and let $g_{k+1}=g_k^{e_k}+\alpha_2(x,y)g_k^{e_k-2}+\dots+\alpha_{e_k}(x,y)$ be the expansion of $g_{k+1}$ with respect to $g_k$. We have the following:
\begin{enumerate}[(i)]
\item $\mathrm{int}(g_{k+1},\alpha_{e_k}(x,y))=\frac{r_k}{ d_{k+1}}e_k= \mathrm{int}(g_{k+1},g_k^{e_k})$.

\item For all $i=2,\dots,e_{k}-1$, $\mathrm{int}(g_{k+1},\alpha_{i}(x,y)) < i\frac{r_k}{ d_{k+1}}$. 

\end{enumerate}
Since the expression of $g_{k+1}$ in terms of $g_k$ is monic and there is no term of degree $e_{k}-1$, we set $\alpha_0=1$ and $\alpha_1=0$. Then the convex hull of the points $(0,0), \left(\mathrm{int}(g_{k+1},\alpha_{i}(x,y)),\frac{r_k}{d_{k+1}}(e_k-i)\right)$, $i\in\{0,\dots,e_k\}$ is the triangle with vertices $(0,0),\left(0,\frac{r_k}{d_{k+1}}e_k\right)$ and ${(\frac{r_k}{d_{k+1}}e_k,0)}$. The \emph{generalized Newton polygon} of $g_{k+1}$ with respect to $g_k$ is the segment of line joining $\left(0,\frac{r_k}{d_{k+1}}e_k\right)$ with $\left(\frac{r_k}{d_{k+1}}e_k,0\right)$ (recall that, by (ii), it does not contain any of the  points  $\left(\mathrm{int}(g_{k+1},\alpha_{i}(x,y)),(e_k-i)\frac{r_k}{d_{k+1}}\right), i=2,\dots,e_k-1$).
\end{nota}

\begin{lema} \label{muf0}
Let the notations be as above. If $\mu(f)=0$, then for all $k\in\{1,\dots,h\}$, there exist $a^k_2,\dots,a^k_{e_k},a_k\in \KK$ such that $a^k_{e_k}\not=0$ and 

\begin{enumerate}[(i)]
\item $g_2=g_1^{e_1}+a^1_2g_1^{e_1-2}+\dots+a^1_{e_1-1}g_1+a^1_{e_1}x+a_1$.

\item  If $k\geq 2$, then $g_{k+1}=g_k^{e_k}+a^k_2g_k^{e_k-2}+\dots+a^k_{e_k-1}g_k+a^k_{e_k}g_{k-1}+a_k$.
\end{enumerate}
\end{lema}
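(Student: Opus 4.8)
The plan is to analyze the $g_k$-adic expansion $g_{k+1}=g_k^{e_k}+\alpha_2 g_k^{e_k-2}+\dots+\alpha_{e_k}$ of Remark~\ref{nota21} and to show, using $\mu(f)=0$, that the intersection-multiplicity constraints on the coefficients $\alpha_i$ force them to be constants for $2\le i\le e_k-1$ and to have the prescribed shape for $i=e_k$. The whole argument is driven by the numerical data of $\Gamma(g_{k+1})$, so I would first fix that. By Proposition~\ref{recursive-appr}, $g_{k+1}$ has one place at infinity, its approximate roots are $g_1,\dots,g_k$, and $\Gamma(g_{k+1})=\langle R_0,\dots,R_k\rangle$ where $R_j:=r_j/d_{k+1}=\mathrm{int}(g_{k+1},g_j)$ (reading $g_0:=x$, $R_0=r_0/d_{k+1}$). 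Since $\mu(f)=0$, Corollary~\ref{cor-mu-f-0} gives $r_i=d_{i+1}$, whence $R_k=1$, $R_{k-1}=d_k/d_{k+1}=e_k$, and $R_0>R_1>\dots>R_k=1$. This collapse of $R_k$ to $1$ and $R_{k-1}$ to $e_k$ is the mechanism that makes everything degenerate to constants.

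Next I would expand each coefficient. Because $\deg_y\alpha_i<\deg_y g_k=n/d_k$, Lemma~\ref{dev-appr} applied to $g_{k+1}$ writes $\alpha_i$ uniquely as $\sum_{\underline\theta}c_{\underline\theta}(x)\,g_1^{\theta_1}\cdots g_{k-1}^{\theta_{k-1}}$, the $y$-degree bound ruling out $g_k$ and $g_{k+1}$; in particular $g_{k+1}\nmid\alpha_i$, so Lemma~\ref{dev-appr}(ii) yields $\mathrm{int}(g_{k+1},\alpha_i)=\max_{\underline\theta}\{\deg_x c_{\underline\theta}\,R_0+\sum_{j=1}^{k-1}\theta_j R_j\}$. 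The key arithmetic observation is that every non-constant monomial in such an expansion has intersection value at least $\min\{R_0,\dots,R_{k-1}\}=R_{k-1}=e_k$, since $\deg_x c\ge 1$ contributes $\ge R_0\ge e_k$ and any $\theta_j\ge 1$ contributes $\ge R_j\ge R_{k-1}=e_k$.

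Then the two cases fall out. For $2\le i\le e_k-1$, Remark~\ref{nota21}(ii) forces $\mathrm{int}(g_{k+1},\alpha_i)<iR_k=i\le e_k-1<e_k$, so no non-constant monomial can occur and $\alpha_i\in\KK$. For $i=e_k$, Remark~\ref{nota21}(i) gives $\mathrm{int}(g_{k+1},\alpha_{e_k})=e_kR_k=e_k=R_{k-1}$; since non-constant monomials contribute at least $e_k$ while the maximum equals $e_k$, every non-constant monomial has value exactly $e_k$, and solving $\deg_x c\,R_0+\sum_{j=1}^{k-1}\theta_j R_j=R_{k-1}$ under the strict ordering $R_0>\dots>R_{k-1}$ isolates a single monomial: $a_{e_k}^k g_{k-1}$ when $k\ge 2$ (here $R_0>e_k$ kills the $x$-degree), and $a_{e_1}^1 x$ when $k=1$ (here $R_0=e_1=R_{k-1}$, so the attaining term is a power of $x$ of degree $1$). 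All remaining monomials are constant, giving $\alpha_{e_k}=a_{e_k}^k g_{k-1}+a_k$, and attainment of the maximum forces $a_{e_k}^k\ne 0$.

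I expect the main obstacle to be the bookkeeping that translates the intersection constraints of Remark~\ref{nota21} into statements about $\Gamma(g_{k+1})$, in particular verifying cleanly that $\mu(f)=0$ propagates to $\mu(g_{k+1})=0$ and pins down $R_k=1$, $R_{k-1}=e_k$. The boundary case $k=1$ also needs separate care, since there $g_{k-1}$ must be read as $x$ and $R_0$ coincides with $R_{k-1}=e_1$; this coincidence is precisely what replaces a lower approximate root by the variable $x$ in item (i). Finally, the degree-filtration claim that only $g_1,\dots,g_{k-1}$ appear in the expansion of $\alpha_i$ deserves a word of justification, but it is the standard triangularity of approximate-root expansions with respect to $\deg_y$.
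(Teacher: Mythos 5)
Your proof is correct and takes essentially the same route as the paper: both use Corollary \ref{cor-mu-f-0} to get $r_i=d_{i+1}$, the inequalities of Remark \ref{nota21}, and the expansion of Lemma \ref{dev-appr} to force the coefficients $\alpha_i$ with $2\le i\le e_k-1$ to be constants and to isolate the unique maximizing monomial $a^k_{e_k}g_{k-1}$ (respectively $a^1_{e_1}x$ when $k=1$). The only cosmetic difference is that you handle all $k$ uniformly at the level of $g_{k+1}$ with the rescaled generators $r_j/d_{k+1}$, whereas the paper works out $k=h$ in detail (using divisibility by $d_h=\gcd(r_0,\dots,r_{h-1})$ where you use the minimum-generator bound $e_k$) and then invokes Proposition \ref{recursive-appr} to transfer the argument to smaller $k$.
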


\begin{proof} We prove (ii)  for $k=h$. Let, to this end,
$$
f=g_{h+1}=g_h^{d_h}+a_2(x,y)g_h^{d_h-2}+\dots+a_{d_h-1}(x,y)g_h+a_{d_h}(x,y)
$$
be the expansion of $f$ with respect to  $g_h$. From this expression of $f$, we obtain $\mathrm{int}(f,g_h)=\mathrm{int}(g_h,a_{d_h}(x,y))$. By Remark \ref{nota21},  $\mathrm{int}(f,a_{d_h}(x,y))=\mathrm{int}(f,g_h^{d_h})=r_hd_h=d_h$ ($r_h=d_{h+1}=1$ by Corollary \ref{cor-mu-f-0}) and for all $k=2,\dots,d_h-1$, $\mathrm{int}(f,a_k(x,y)) < r_hk= k < d_h$. From $\mathrm{int}(f,g_h^{d_h})=r_hd_h$, we deduce $\mathrm{int}(g_h,a_{d_h}(x,y))=r_h=d_{h+1}=1$. But for all $k\in \{2,\dots,d_h\}$, $\mathrm{int}(f,a_k(x,y))\in \langle r_0,r_1,\dots,r_{h-1}\rangle$ and $d_h=\gcd(r_0,\dots,r_{h-1})$. This proves that $\mathrm{int}(f,a_k(x,y))=0$ for all $k$, and consequently $a_2(x,y),\dots,a_{d_h-1}(x,y)\in\KK$. 
We know that $\mathrm{int}(f,a_{d_h}(x,y))=d_h$, which by Corollary \ref{cor-mu-f-0} equals $r_{h-1}$. 
From Lemma \ref{dev-appr}, $a_{d_h}=\sum_{\underline{\theta}\in B'}  c_{\underline{\theta}}(x) g_1^{\theta_1}\dots g_h^{\theta_h} f^{\theta_{h+1}}$. Since $\deg_y a_{d_h}< \deg_y g_h$, this expression is of the form $a_{d_h}=\sum_{\underline{\theta}\in B'}  c_{\underline{\theta}}(x) g_1^{\theta_1}\dots g_{h-1}^{\theta_{h-1}}$. Moreover, Lemma \ref{dev-appr} also tells us that there exists a unique $\underline{\theta}^0$ such that $r_{h-1}=\mathrm{int}(f,a_{d_h})=\deg_x c_{\underline{\theta}^0}(x)r_0+\sum_{i=1}^{h-1}\theta_i^0 r_i$. This forces $\deg_x c_{\underline{\theta}^0}(x)= \theta_1^0=\dots =\theta_{h-2}^0=0$ and $\theta_{h-1}^0=1$. Observe also that $r_{h-1}=d_h\le r_i$ for all $i\in\{0,\ldots, h-2\}$. We deduce that there is $a_{e_h}^h\in \KK^*$ and $a_h\in\KK$ such that $a_{d_h}(x,y)=a_{e_h}^hg_{h-1}+a_h$. This proves our assertion. Now the same argument works for all $2\leq k\leq h-1$.

Let $k=1$. We have $\Gamma(g_2)=\langle \frac{r_0}{ d_2}=e_1, \frac{r_1}{d_2}\rangle$. Furthermore,
$$
g_2=y^{e_1}+a_2(x)y^{e_1-2}+\dots+a_{e_1}(x).
$$
Now $\mathrm{int}(g_2,a_{e_1}(x))= \mathrm{int}(g_2,y^{e_1})= e_1\frac{r_1}{d_2}$ (Remark \ref{nota21} once more). Also, by definition $\mathrm{int}(g_2,a_{e_1}(x)=\mathrm{rank}_\KK(\KK[x,y]/(g_2,a_{e_1}(x))) =e_1\deg_x a_{e_1}(x) = \frac{r_0}{d_2}\deg_x a_{e_1}(x)$. Since $r_0=d_1$ and $r_1=d_2$, we get $\deg_x a_{e_1}(x)=1$ and there is $a_{e_1}^1\in \KK^*$ and $a_1\in\KK$ such that $a_{e_1}(x)=a_{e_1}^1x+a_1$. This proves (i).
\end{proof}

\begin{lema}\label{Kxy} With the same hypothesis as in  Lemma \ref{muf0}, we have
\[\KK[f,g_h]=\KK[g_h,g_{h-1}]=\dots=\KK[g_2,g_1]=\KK[x,y].\]
\end{lema}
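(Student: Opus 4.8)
The whole statement is essentially a direct consequence of the explicit expansions furnished by Lemma \ref{muf0}, so the plan is to establish each equality in the chain separately, by proving the two inclusions for each link and then concatenating. The guiding observation is that in both formulas of Lemma \ref{muf0} the ``lowest'' variable ($g_{k-1}$ in part (ii), and $x$ in part (i)) occurs \emph{linearly} and with a \emph{nonzero} coefficient $a^k_{e_k}$; this is exactly what allows the defining relation to be inverted, turning a one-sided containment into an equality of subalgebras.

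\textbf{The generic link $\KK[g_{k+1},g_k]=\KK[g_k,g_{k-1}]$ for $2\le k\le h$.} First I would read off from Lemma \ref{muf0}(ii) that
\[
g_{k+1}=g_k^{e_k}+a^k_2g_k^{e_k-2}+\dots+a^k_{e_k-1}g_k+a^k_{e_k}g_{k-1}+a_k,
\]
which exhibits $g_{k+1}$ as an element of $\KK[g_k,g_{k-1}]$; hence $\KK[g_{k+1},g_k]\subseteq \KK[g_k,g_{k-1}]$. For the reverse inclusion I would solve the same relation for $g_{k-1}$, which is legitimate because $a^k_{e_k}\neq 0$:
\[
g_{k-1}=\frac{1}{a^k_{e_k}}\Bigl(g_{k+1}-g_k^{e_k}-a^k_2g_k^{e_k-2}-\dots-a^k_{e_k-1}g_k-a_k\Bigr)\in\KK[g_{k+1},g_k].
\]
Together with $g_k\in\KK[g_{k+1},g_k]$ this yields $\KK[g_k,g_{k-1}]\subseteq\KK[g_{k+1},g_k]$, and the two inclusions give the desired equality. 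Applying this for $k=h,h-1,\dots,2$ (recalling $f=g_{h+1}$) produces the chain $\KK[f,g_h]=\KK[g_h,g_{h-1}]=\dots=\KK[g_3,g_2]=\KK[g_2,g_1]$.

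\textbf{The base link $\KK[g_2,g_1]=\KK[x,y]$.} Here I would use Lemma \ref{muf0}(i), where the role of $g_{k-1}$ is played by $x$: from
\[
g_2=g_1^{e_1}+a^1_2g_1^{e_1-2}+\dots+a^1_{e_1-1}g_1+a^1_{e_1}x+a_1
\]
and $g_1=y$ we get $g_2\in\KK[y,x]$, hence $\KK[g_2,g_1]\subseteq\KK[x,y]$; and since $a^1_{e_1}\neq 0$, solving for $x$ gives $x\in\KK[g_2,g_1]$, so that $\KK[x,y]=\KK[g_1,x]\subseteq\KK[g_2,g_1]$. This closes the chain. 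The argument is short and the only points requiring care are the non-vanishing of each $a^k_{e_k}$ (already guaranteed by Lemma \ref{muf0}) and the fact that the base step is formally different from the generic step, with $x$ substituting for the nonexistent $g_0$; once these are noted, there is no genuine obstacle beyond bookkeeping.
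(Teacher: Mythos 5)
Your proposal is correct and follows exactly the paper's own argument: each equality $\KK[g_{k+1},g_k]=\KK[g_k,g_{k-1}]$ (and the base case $\KK[g_2,g_1]=\KK[x,y]$) is read off from the expansions in Lemma \ref{muf0}, using that the linear term $a^k_{e_k}g_{k-1}$ (respectively $a^1_{e_1}x$) has nonzero coefficient so the relation can be inverted, and the chain is then closed by induction on $k$. You merely spell out the two inclusions that the paper leaves implicit; there is no substantive difference.
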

\begin{proof} We have $\KK[x,y]=\KK[g_1,g_2]=\KK[y,g_2]$. In fact $g_2=g_1^{e_1}+a^1_2g_1^{e_1-2}+\dots+a^1_{e_1-1}g_1+a^1_{e_1}x+a_1$, where $a^1_2,\dots,a^1_{e_1-1},a_1\in \KK$ and $a^1_{e_1}\in \KK^*$. Since $g_1=y$,  the result is obvious. Let $k\in \lbrace 2,\dots,h\rbrace$. As $g_{k+1}=g_k^{e_k}+a^k_2g_k^{e_k-2}+\dots+a^k_{e_k-1}g_k+a^k_{e_k}g_{k-1}+a_k$, where  $a^k_2,\dots,a^k_{e_k},a_k\in \KK$ and $a^k_{e_k}\in\KK^*$, we have  $\KK[g_{k},g_{k+1}]=\KK[g_{k-1},g_k]$. Now our assertion results by induction on $k=1,\dots,h$.
\end{proof}

\begin{corolario}\label{corKxy} Let the hypotheses be as in  Lemma \ref{muf0}. Define $\sigma_0:\KK[g_1=y,g_2]\to \KK[x,y], \sigma_0(g_1)=x, \sigma_0(g_2)=y$ and for all $1\leq k\leq h-1$, let $\sigma_{k}:\KK[g_{k+1},g_{k+2}]\to \KK[g_{k},g_{k+1}], \sigma_k(g_{k+1})=g_{k}, \sigma_k(g_{k+2})=g_{k+1}$.
\begin{enumerate}[i)]
\item For all $0\leq k\leq h-1$, $\sigma_k$ is an automorphism.

\item Let $\sigma^h=\sigma_{0}\circ\sigma_{1}\circ\dots\circ\sigma_{h-1}:\KK[g_h,f=g_{h+1}]\to \KK[x,y]$, then $\sigma^h$ is an automorphism such that $\sigma^h(f)=y$ (and $\sigma^h(g_h)=x$).
\end{enumerate}
\end{corolario}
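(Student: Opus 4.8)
The plan is to establish both parts essentially from the structural equations provided by Lemma~\ref{muf0}, which express each approximate root $g_{k+1}$ as a monic polynomial in $g_k$ with a ``linear tail'' whose leading coefficient $a^k_{e_k}$ is a nonzero scalar. The key observation is that each $\sigma_k$ is defined by prescribing its values on a pair of polynomials that generate $\KK[x,y]$ (by Lemma~\ref{Kxy}), so the content of part (i) is that these prescriptions extend to a \emph{well-defined} ring homomorphism and that this homomorphism is bijective.

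First I would verify that $\sigma_0$ is an automorphism. Since $g_1=y$ and, by Lemma~\ref{muf0}(i), $g_2=g_1^{e_1}+a^1_2g_1^{e_1-2}+\dots+a^1_{e_1-1}g_1+a^1_{e_1}x+a_1$ with $a^1_{e_1}\in\KK^*$, one can solve this relation for $x$ as a polynomial expression in $g_1=y$ and $g_2$; this shows $\KK[g_1,g_2]=\KK[x,y]$ concretely and exhibits the inverse. Defining $\sigma_0(y)=x$, $\sigma_0(g_2)=y$ gives a $\KK$-algebra endomorphism of $\KK[x,y]$, and because $\{g_1,g_2\}$ and $\{x,y\}$ are each algebraically independent generating sets, $\sigma_0$ sends a transcendence basis to a transcendence basis and is surjective, hence an automorphism. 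For $1\le k\le h-1$, the same reasoning applies to $\sigma_k\colon\KK[g_{k+1},g_{k+2}]\to\KK[g_k,g_{k+1}]$: by Lemma~\ref{muf0}(ii) the relation $g_{k+2}=g_{k+1}^{e_{k+1}}+\dots+a^{k+1}_{e_{k+1}}g_k+a_{k+1}$ with $a^{k+1}_{e_{k+1}}\in\KK^*$ lets us recover $g_k$ polynomially from $g_{k+1}$ and $g_{k+2}$, and both source and target equal $\KK[x,y]$ by Lemma~\ref{Kxy}, so $\sigma_k$ is again an automorphism of $\KK[x,y]$.

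For part (ii), I would simply compose. Each $\sigma_k$ is an automorphism of $\KK[x,y]$, so the composite $\sigma^h=\sigma_0\circ\sigma_1\circ\dots\circ\sigma_{h-1}$ is an automorphism. To track the image of $f=g_{h+1}$, I would chase it through the chain: $\sigma_{h-1}(g_{h+1})=g_h$, then $\sigma_{h-2}(g_h)=g_{h-1}$, and inductively $\sigma_k$ maps $g_{k+2}\mapsto g_{k+1}$, so the index decreases by one at each stage. After applying all of $\sigma_{h-1},\dots,\sigma_1$ the element $g_{h+1}$ has become $g_2$, and finally $\sigma_0(g_2)=y$; thus $\sigma^h(f)=y$. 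The parallel chase starting from $g_h$ gives $\sigma_{h-1}(g_h)=g_{h-1}$, and so on down to $\sigma_0(g_1)=x$, yielding $\sigma^h(g_h)=x$.

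The only genuinely delicate point is the well-definedness and bijectivity claimed in part (i): one must be careful that prescribing a homomorphism on the generating pair really does extend consistently, i.e.\ that $\sigma_k$ respects all polynomial relations among the $g_i$. The cleanest way to sidestep any worry is to exhibit an explicit two-sided inverse using the solvability of the defining equations for the trailing variable (the $x$ in the $k=0$ case, the $g_k$ in the general case), which is guaranteed precisely by $a^k_{e_k}\ne 0$; once an explicit inverse is in hand, bijectivity is immediate and no separate surjectivity or injectivity argument is needed. I expect this inversion bookkeeping, rather than the composition in part (ii), to be where the care is required.
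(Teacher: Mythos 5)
Your proposal is correct and follows essentially the same route as the paper, whose proof of Corollary~\ref{corKxy} is precisely a citation of Lemmas~\ref{muf0} and~\ref{Kxy}: the nonvanishing of $a^k_{e_k}$ lets you solve for the trailing generator ($x$, respectively $g_k$), which is exactly how Lemma~\ref{Kxy} identifies all the rings $\KK[g_k,g_{k+1}]$ with $\KK[x,y]$, and then each $\sigma_k$ is an automorphism and the composition chase gives $\sigma^h(f)=y$ and $\sigma^h(g_h)=x$. Your extra care about well-definedness (via the explicit inverse) is a sound filling-in of detail the paper leaves implicit, not a different method.
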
 
\begin{proof}
This results from  Lemmas \ref{muf0} and \ref{Kxy}. 
\end{proof}

Let the notations be as above. In particular $f=y^n+a_2(x)y^{n-2}+\dots+a_n(x)$ and $r_0=n >r_1=m=\deg_xa_n(x)$. Let $k$ be the greatest element in $\lbrace 1,\dots,h+1\rbrace$ such that $\mu(g_{k})=0$ (observe that $\mu(g_1)=0$). By Corollaries \ref{muf-mugk} and \ref{cor-mu-f-0} and Proposition \ref{recursive-appr}, $\mu(g_1)=\dots=\mu(g_k)=0$ and $r_i=d_{i+1}$ for all $1\leq i\leq k-1$. With the notations of Corollary \ref{corKxy}, $\sigma^{k-1}:\KK[g_{k-1},g_k]\to \KK[x,y]$ is an automorphism such that $\sigma^{k-1}(g_{k-1})=x$ and $\sigma^{k-1}(g_k)=y$. Also, $\mu(g_{i})>0$ for all $k+1\leq i\leq h+1$. Let $\tilde{f}=\sigma^{k-1}(f)$. Assume that $k\ge 2$, and let us focus on $k=2$. Then
$$
g_3=g_2^{e_2}+\alpha^2_2(x,y)g_2^{e_2-2}+\dots+\alpha^2_{e_2}(x,y).
$$
The conductor $\mu(g_2)$ of $\Gamma(g_2)$ is $0$, and Lemma \ref{muf0} states that $g_2=y^{e_1}+a^1_2y^{e_1-2}+\dots+a^1_{e_1-1}y+a^1_{e_1}x+a_1$, with $a_{e_1}^1\neq 0$. Set $\varphi(y)=y^{e_1}+a^1_2y^{e_1-2}+\dots+a^1_{e_1-1}y+a_1$, and $a=a_{e_1}^1$. Hence $x=\frac{g_2-\varphi(y)}{a}$. With the notations of  Corollary \ref{corKxy}, we have $\sigma_0(g_2)=y$ and $\sigma_0(g_1)=x$. Thus
$$
\sigma_0(g_3)=y^{e_2}+\alpha^2_2\left(\frac{y-\varphi(x)}{a},x\right)y^{e_2-2}+\dots+\alpha^2_{e_2}\left(\frac{{y-\varphi(x)}}{a},x\right).
$$
Note that by using Remark \ref{nota21} and Lemma \ref{dev-appr}, we deduce that there exist $c,d\in\NN$, with $d< \frac{n}{d_2}$, such that int$(g_3,\alpha^2_{e_2}(x,y))=\mathrm{int}(g_3,g_2^{e_2})=e_2\frac{r_2}{d_3} =e_2\mathrm{int}(g_2,x^cy^d)=e_2\left(c\frac{n}{d_2}+d\frac{r_1}{d_2}\right)=e_2\left( c\frac{n}{d_2}+d\right)$ (because $r_1=d_2$). 

Also, from Remark \ref{nota21}, we know that $\mathrm{int}(g_3,\alpha_k^2(x,y))< k\frac{r_2}{d_3}$.

But 
\[a^{c}\sigma_0(x^cy^d)= a^{c}\left(\frac{{y-\varphi(x)}}{a}\right)^cx^d=x^{c\frac{n}{d_2}+d}+\sum_{\begin{matrix}j<e_2,\\ i\frac{d_2}{d_3}+j\frac{r_2}{d_3}<\frac{d_2}{d_3}\frac{r_2}{d_3}\end{matrix}}c_{ij}x^iy^j,\] for some $c_{ij}\in \KK$. Hence, up to constants, $\sigma_0(x^cy^d)=x^{r_2/d_3}+\sum_{j<e_2,i\frac{d_2}{d_2}+j\frac{r_2}{d_3}<\frac{d_2}{d_3}\frac{r_2}{d_3}}x^iy^j$. The same calculations  with $\sigma_0(\alpha_2^2(x,y)), \ldots, \sigma_0(\alpha_{e_2-1}^2(x,y))$ shows that
$$
\sigma_0(g_3)=y^{e_2}+\bar{\alpha}^2_2(x)y^{e_2-2}+\dots+\bar{\alpha}^2_{e_2}(x)+\bar{c}x^{\frac{r_2}{ d_3}},
$$
with $\bar{c}\in\KK^*$, and $\Gamma(g_3)=\langle e_2=\frac{d_2}{d_3},\frac{r_2}{d_3}\rangle=\langle \frac{r_1}{d_3},\frac{r_2}{d_3}\rangle$. We prove in a similar way that $\Gamma(\sigma_0(f))=\langle r_1,r_2,\dots,r_h\rangle$ and that $\bar{g}_1=\sigma_0(g_2)=y,\bar{g}_2=\sigma_0({g_3}),\dots,\bar{g}_{h-1}=\sigma_0(g_h)$ are the set of approximate roots of $\bar{f}=\sigma_0(f)$. If $k\geq 3$, then $\mu(\bar{g}_2)=0$, whence we restart the process above with $\bar{g}_2$.  We finally get the following:

\begin{enumerate}
\item ${\sigma}^{k-1}(f)=y^{d_k}+\bar{\alpha}_2(x,y)y^{d_k-2}+\dots+\bar{\alpha}_{d_k}(x,y)$.
\item ${\sigma}^{k-1}(g_k)=y,{\sigma}^{k-1}(g_{k+1}),\dots,{\sigma}^{k-1}(g_{h})$ are the approximate roots of ${\sigma}^{k-1}(f)$.
\item $\Gamma({\sigma}^{k-1}(f))=\langle r_{k-1}=d_k,\dots,r_h\rangle$.
\end{enumerate}

Note that if $\deg_y\sigma^{k-1}(f)=r_{k-1}=d_k< \mathrm{deg}_x(\sigma^{k-1}(f))=r_k$, then  the change of variables $X=y,Y=x$ will change the sequence $(r_{k-1},r_k,\dots,r_h)$ into $(r_k,r_{k-1},\dots,r_h)$ in such a way that  in the process above, we can always assume that the degree of ${\sigma}^{k-1}(f)$ in $y$ is also the total degree of $f$.

Observe also that $\mu({\sigma}^{k-1}(g_{k+1}))>0$. Hence if we define the \emph{reduced degree} of $f$, denoted $\mathrm{rdeg}(f)$, to be $\mathrm {rdeg}(f)=\inf\lbrace \deg(\sigma(f)) \mid  \sigma$ is an automorphism of $\KK[x,y]\rbrace$, then $\mathrm{rdeg}(f)=d_k$.

\subsection{The reduced equation} Let $F(x,y)=y^N+a_1(x)y^{N-1}+\dots+a_N(x)$ be a polynomial with one place at infinity and assume that $a_1(x)=0$ and also that $N> r_1=\deg_xa_1(x)> \gcd(N,r_1)$. Let $a_n(x)=cx^{r_1}+\sum_{i=1}^{r_1-1}c_ix^i$. We may assume, without loss of generality, that $c=1$. Write $F(x,y)= x^{r_1}+b_1(y)x^{r_1-1}+\dots+b_{r_1}(y)$. Suppose that $b_1(y)\not=0$ and let $p=\deg_yb_1(y)$. Clearly $p+r_1-1\leq N-1$, hence $p\leq N-r_1$. Furthermore, $pr_1+(r_1-1)N\leq Nr_1$, and thus $pr_1\leq N$. But $r_1$ does not divide $N$, whence $pr_1\leq N-1$. By the change of variables $X=x+\frac{b_1(y)}{r_1}, Y=y$, we get $\bar{F}(X,Y)=Y^{N}+\bar{a}_1(X)Y^{N-1}+\dots+\bar{a}_N(X)$ and  $\bar{a}_1(X)=\bar{a}_1\in\KK$. If $\bar{a}_1\not= 0$, then we consider the change of variables $X_1=X, Y_1=Y+{\frac{\bar{a}_1}{N}}$. Let $\tilde{F}(X_1,Y_1)=\bar{F}(X_1, Y_1-{\frac{\bar{a}_1}{N}})$ and let $\mathrm R(F)=\tilde{F}(x,y)$. We say that $\mathrm R(F)$ is the \emph{reduced form} of $F$. We have the following. 
\begin{itemize}

\item $\mathrm R(F)(x,y)=y^N+c_2(x)y^{N-2}+\dots+c_N(x)$ with $\deg_xc_i(x)<i$ for all $2\leq i\leq N$.

\item $c_N(x)=x^{r_1}+\sum_{i=1}^{r_1-2}d_ix^i$, and the coefficient of $x^{r_1-1}$ in $\mathrm R(F)$ is $0$.

\item Let $d$ be a divisor of $N$ and let $G_d=\mathrm{App}_d(\mathrm R(F))$. Then $\mathrm R(G_d)=G_d$.

\item Let $w_1$ (respectively $w_2$) be an $N$th root (respectively an $r_1$th root) of unity in $\KK$ and let $\sigma$ be the automorphism of $\KK[x,y]$ such that $\sigma(x)=w_1x,\sigma(y)=w_2y$, then $\sigma(\mathrm R(F))$ satisfies the same properties as $\mathrm R(F)$. Hence $\mathrm R(F)$ is unique modulo this type of automorphisms.
\end{itemize}

Let $f$ be as above. We define the \emph{reduced equation} of $f$, denoted $\mathrm r(f)$, to be $\mathrm R({\sigma}^{k-1}(f))$. It follows that $\mathrm{rdeg}(f)=\deg_y{\sigma}^{k-1}(f)=\deg_y(\mathrm r(f))$ (as above, $k$ is the greatest element in $\lbrace 1,\dots,h+1\rbrace$ such that $\mu(g_{k})=0$).

\begin{example}
Let $f(x,y)=(y^2-x)^2-xy$. We have $\Gamma(f)=\langle 4,2,3\rangle$, $g_1=y$, $g_2=y^2-x$. Now $\mu(g_2)=0$, hence 
$\sigma_0:\KK[g_2,y]\to \KK[y,x]$, $\sigma_0(g_2)=y$, $\sigma_0(y)=x$ is an automorphism of $\KK[x,y]$ and 
$\sigma_0(x)=\sigma_0(y)^ 2-\sigma_0(g_2)=x^2-y$. Therefore
$$
f^1=\sigma_0(f)=y^2-(x^2-y)x=y^2+xy-x^3.
$$
\begin{enumerate}
\item  $\deg_yf^1=2<3$, hence we interchange $x,y$ in $-f^1$, so we get $f^2(x,y)=y^3-x^2-xy$. 

\item  In $f^2$ We change $x$ into $wx$, where $w$ is a square root of $-1$. Hence we get $f^3(x,y)=y^3+x^2-wxy$. 

\item Let $X=x-\frac{wy}{2}$, $Y=y$. We have $f'(X,Y)=Y^3+X^2+\frac{Y^2}{4}$, hence $f^4(x,y)= y^3+x^2+\frac{y^2}{4}$.

\item Let $Y_1=y+\frac{1}{12}$, $X_1=x$. Then $f^4(X_1,Y_1-\frac{1}{12})= Y_1^3-\frac{1}{48}Y_1+\frac{2}{12^3}+X_1^2$. Finally $\mathrm r(f)(x,y)=y^3+x^2-\frac{1}{48}y+\frac{2}{12^3}$.
\end{enumerate}
\begin{verbatim}
gap> f:=(y^2-x)^2-x*y;;
gap> SemigroupOfValuesOfPlaneCurveWithSinglePlaceAtInfinity(f,"all");
[ [ 4, 2, 3 ], [ y, y^2-x ] ]
gap> rf:=y^3+x^2-1/48*y+2/12^3;;
gap> SemigroupOfValuesOfPlaneCurveWithSinglePlaceAtInfinity(rf,"all");
[ [ 3, 2 ], [ y ] ]
\end{verbatim}
\end{example}

Let $a_0,a_1,\dots,a_s$ be a set of coprime nonnegative integers and let $D_1=a_0$ and $D_k=\gcd(D_{k-1},a_{k-1})$ for all $2\leq k\leq s+1$. We say that $(a_0,\dots,a_s)$ is a \emph{$\delta$-sequence} if the following conditions hold:\label{def-delta-seq}

\begin{enumerate}[1)]
\item $\Gamma=\langle a_0,\dots,a_s\rangle$ is free with respect to the arrangement $(a_0,\dots,a_s)$.

\item For all $1\leq k\leq s-1$, $a_kD_k > a_{k+1}D_{k+1}$.

\item $a_0>a_1>D_2>D_3>\dots>D_{h+1}=1$.
\end{enumerate}
Note that Conditions 1) and 2) imply that $\Gamma$ is the semigroup of a polynomial with one place at infinity. 

\begin{proposicion}
Let the notations be as above. 
\begin{enumerate}
\item $\Gamma(\mathrm r(f))=\langle r_k,\dots,r_h\rangle$ and  $(r_k,\dots,r_h)$ is a $\delta$-sequence.
\item $\mathrm{rdeg}(f)=d_k=\deg_y\mathrm r(f)$.
\end{enumerate}
\end{proposicion}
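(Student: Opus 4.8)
The plan is to deduce both parts from two ingredients already available: the Abhyankar semigroup is an invariant of the automorphism class of $f$, and the polynomial $\mathrm r(f)$ built in the preceding discussion is, after the normalising swap, a one place curve in the standard form of Section~\ref{sec1} whose characteristic data are precisely the tail $(r_{k-1},r_k,\dots,r_h)$ (recall $r_{k-1}=d_k$).

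First I would record the invariance. If $\tau$ is any automorphism of $\KK[x,y]$ and $g\notin(f)$, then $\tau$ induces an isomorphism $\KK[x,y]/(f,g)\cong\KK[x,y]/(\tau f,\tau g)$, so $\mathrm{int}(\tau f,\tau g)=\mathrm{int}(f,g)$; letting $g$ run over $\KK[x,y]\setminus(f)$ yields $\Gamma(\tau f)=\Gamma(f)$. Since $\mathrm r(f)=\mathrm R(\sigma^{k-1}(f))$ is obtained from $f$ by composing the automorphism $\sigma^{k-1}$ (an automorphism by Corollary~\ref{corKxy}, applied to $g_k$ since $\mu(g_k)=0$) with the translations $x\mapsto x+b_1(y)/r_1$ and $y\mapsto y+\mathrm{const}$ that define $\mathrm R$, we get $\Gamma(\mathrm r(f))=\Gamma(\sigma^{k-1}(f))=\langle r_{k-1},\dots,r_h\rangle$, the last equality coming from the preceding discussion. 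This settles the semigroup computation in~(1).

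Next I would check the three conditions of a $\delta$-sequence (page~\pageref{def-delta-seq}) for the reordered tail. The key observation is that $\mathrm r(f)$ is itself a one place curve in standard form, whose approximate roots are the images of $g_k,\dots,g_h$ and whose characteristic sequence is the tail above, arranged (via the swap $X=y,\,Y=x$ when $r_{k-1}<r_k$) so that its first and largest term is $\deg_y\mathrm r(f)$. Applying to $\mathrm r(f)$ the Proposition of Section~\ref{sec1} that computes $\Gamma$ of a one place curve gives freeness (condition~1) and the inequalities $a_iD_i>a_{i+1}D_{i+1}$ (condition~2) at once. For condition~3 the strict chain $d_k>d_{k+1}>\dots>d_{h+1}=1$ comes from each $e_i>1$, while the single remaining strict inequality $r_k>d_{k+1}$ is exactly where the maximality of $k$ intervenes: as $\mu(g_{k+1})>0$, Corollary~\ref{cor-mu-f-0} rules out $r_k=d_{k+1}$, and since $d_{k+1}=\gcd(d_k,r_k)$ divides $r_k$ this forces $r_k>d_{k+1}$.

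For~(2), the translations making up $\mathrm R$ fix the $y$-degree (this is what the degree bookkeeping in the reduced-form construction shows), so $\deg_y\mathrm r(f)=\deg_y\sigma^{k-1}(f)$, the value computed in the discussion; the explicit chain $\sigma^{k-1}$ then realises an automorphic image of $f$ of this degree, giving $\mathrm{rdeg}(f)\le\deg_y\mathrm r(f)$. The reverse inequality --- that no automorphism drops the degree below this value --- is the half I expect to be the main obstacle: it is not delivered verbatim by Section~\ref{sec1} and rests on $\mu(\sigma^{k-1}(g_{k+1}))>0$, i.e.\ the reduced curve has strictly positive genus and hence cannot be equivalent to a curve of smaller degree. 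The secondary delicate point is condition~3 above, being the one $\delta$-condition not provided directly by the $\Gamma$-Proposition and the precise spot where the defining maximality of $k$ is needed.
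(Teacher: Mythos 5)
Your proposal is correct and takes essentially the same route as the paper: the paper's own proof is literally ``Obvious,'' deferring entirely to the discussion preceding the Proposition, and your argument is that discussion made explicit --- invariance of $\Gamma(\cdot)$ under automorphisms of $\KK[x,y]$, the already-established computation $\Gamma(\sigma^{k-1}(f))=\langle r_{k-1}=d_k,\dots,r_h\rangle$ together with the approximate-roots statement and the normalising swap $X=y$, $Y=x$, and the maximality of $k$ (via $\mu(g_{k+1})>0$ and Corollary \ref{muf-mugk}) ruling out $r_k=d_{k+1}$. The two points you single out as delicate, namely condition 3) of the $\delta$-sequence definition and the lower bound $\mathrm{rdeg}(f)\geq d_k$ resting on $\mu(\sigma^{k-1}(g_{k+1}))>0$, are precisely the assertions the paper itself states in that discussion without further justification, so your write-up matches the paper's argument and its level of rigor at exactly those spots.
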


\begin{proof} 
Obvious.
\end{proof}

In this reduction process the Abhyankar semigroup remains the same, but the $\delta$-sequences shorten.

Let $g,h$ be two polynomials of $\KK[x,y]$. We say that $g$ and $h$ are \emph{equivalent} if $h=\sigma(g)$ for some automorphism $\sigma$ of $\KK[x,y]$. 

\begin{teorema}\label{th-equiv}
Let the notations be as above and let $\Gamma(f)=\Gamma(\mathrm r(f))=\langle n=r_0,\dots,r_h\rangle$, and assume that $(r_0,\dots,r_h)$ is a $\delta$-sequence. Clearly $f$ is equivalent to $\mathrm r(f)$. Let $g$ be a polynomial with one place at infinity. If $g$ is equivalent to $f$, then $\Gamma(g)=\Gamma(\mathrm r(g))=\langle r_0,\dots,r_h\rangle$.
\end{teorema}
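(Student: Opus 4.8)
The plan is to use that the Abhyankar semigroup is built entirely out of intersection multiplicities, and that these are preserved by automorphisms of $\KK[x,y]$; consequently $\Gamma$ is an invariant of the equivalence class, and the statement reduces to locating the relevant polynomials inside the class of $f$.

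First I would isolate the invariance as a lemma. Let $\sigma$ be an automorphism of $\KK[x,y]$ with $g=\sigma(f)$. Being a ring automorphism, $\sigma$ sends the ideal $(f,\phi)$ to $(\sigma(f),\sigma(\phi))=(g,\sigma(\phi))$, and hence induces a $\KK$-linear isomorphism $\KK[x,y]/(f,\phi)\cong \KK[x,y]/(g,\sigma(\phi))$ for every $\phi\in\KK[x,y]$; taking $\KK$-dimensions gives $\mathrm{int}(f,\phi)=\mathrm{int}(g,\sigma(\phi))$. Since $\sigma((f))=(g)$, the bijection $\phi\mapsto\sigma(\phi)$ of $\KK[x,y]$ carries $\KK[x,y]\setminus(f)$ onto $\KK[x,y]\setminus(g)$. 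Applying it to the set defining $\Gamma(f)$ yields $\Gamma(f)=\{\mathrm{int}(g,\psi)\mid \psi\in\KK[x,y]\setminus(g)\}=\Gamma(g)$. Thus equivalent polynomials share the same Abhyankar semigroup.

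Next I would apply the lemma twice. Because $g$ is equivalent to $f$, we obtain $\Gamma(g)=\Gamma(f)=\langle r_0,\dots,r_h\rangle$ at once. For the reduced equation, note that the reduction of Section \ref{sec2} builds $\mathrm r(g)$ from $g$ by composing the automorphisms $\sigma^{k-1}$ of Corollary \ref{corKxy} with the translations and scalings defining $\mathrm R$; each step is an automorphism of $\KK[x,y]$, so $\mathrm r(g)$ is equivalent to $g$. The lemma then gives $\Gamma(\mathrm r(g))=\Gamma(g)$, and combining the two equalities produces $\Gamma(g)=\Gamma(\mathrm r(g))=\langle r_0,\dots,r_h\rangle$, as claimed. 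The role of the hypothesis that $(r_0,\dots,r_h)$ is a $\delta$-sequence is only to guarantee that $f$ is already reduced, so that $\langle r_0,\dots,r_h\rangle$ is the presentation obtained from $f$ itself.

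The genuinely delicate point — and the one place where the one-place-at-infinity hypothesis on $g$ is indispensable — is that $\mathrm r(g)$ is defined at all and is obtained from $g$ by automorphisms. This requires normalizing $g$ to the monic form $y^N+\cdots$ and running the reduction of Section \ref{sec2}, which presupposes that $g$ has one place at infinity so that its approximate roots and reduced form exist. Once this is granted, $\mathrm r(g)$ is automatically equivalent to $g$, so the value $\Gamma(\mathrm r(g))$ is forced by the invariance lemma, with no need to compare the two $\delta$-sequences term by term. The remaining verifications — the bijection between the non-multiples of $f$ and of $g$, and the fact that the reduction uses only automorphisms — are routine.
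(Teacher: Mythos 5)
Your invariance lemma is correct as far as it goes: an automorphism $\sigma$ with $g=\sigma(f)$ induces $\KK$-linear isomorphisms $\KK[x,y]/(f,\phi)\cong\KK[x,y]/(g,\sigma(\phi))$, so $\Gamma(g)=\Gamma(f)$, and since the reduction of Section \ref{sec2} uses only automorphisms, $\Gamma(\mathrm r(g))=\Gamma(g)$ as well. But this establishes only equality of the semigroups as subsets of $\NN$, and that is \emph{not} the content of the theorem. Read with ``the notations as above'' and as it is actually used in Section \ref{sec6}, the conclusion is that the canonical generating sequence of $\mathrm r(g)$ --- its $\delta$-sequence, with $r_0=\deg_y\mathrm r(g)=\mathrm{rdeg}(g)$ as first entry --- coincides with $(r_0,\dots,r_h)$. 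That is strictly stronger, because a numerical semigroup admits many $\delta$-sequences: in the paper's own examples, $(7,6)$, $(14,6,7)$ and $(21,6,7)$ all generate the same semigroup, and the polynomials in Section \ref{sec6} with $\delta$-sequences $(6,4,3)$ and $(4,3)$ satisfy $\Gamma(f)=\Gamma(g)=\langle 3,4\rangle$ yet are declared inequivalent \emph{by this very theorem}. Under your reading the theorem could not separate them, so your argument --- which you explicitly say requires ``no need to compare the two $\delta$-sequences term by term'' --- proves a weaker statement under which the later applications (equivalent one-place polynomials have the same $\delta$-sequence; the counterexample to the conjecture of Shpilrain--Yu) would collapse. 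A warning sign you should have noticed: your proof makes the theorem an immediate triviality and never uses the hypothesis that $(r_0,\dots,r_h)$ is a $\delta$-sequence, whereas the paper spends a page of case analysis on it.

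The missing idea is precisely that page. From $g$ equivalent to $f$ one gets $\mathrm r(g)=\sigma(\mathrm r(f))$ for some automorphism $\sigma$, and one must pin $\sigma$ down by decomposing it into elementary automorphisms (Jung--van der Kulk) and checking that every nontrivial elementary move destroys some normalization built into the reduced equation: $\bar\sigma(y)=ay+b$ with $b\neq 0$ reintroduces the $y^{n-1}$ term; $\bar\sigma(y)=ay+h(x)$ with $\deg_x h\geq 1$, or $\bar\sigma(y)=h(y)+ax$ with $\deg_y h\geq 2$, forces a divisibility between $n$ and the new $\bar r_1$ that the $\delta$-sequence condition $r_0>r_1>\gcd(r_0,r_1)$ excludes (this is where that hypothesis is genuinely used, not merely ``to guarantee that $f$ is already reduced'' as you suggest); and $\sigma(x)=bx+h(y)$ with $h\neq 0$ makes the coefficient of $x^{r_1-1}$ nonzero, contradicting the normalization of $\mathrm R$. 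Only $\sigma(x)=bx$, $\sigma(y)=ay$ with $a$ an $n$th and $b$ an $r_1$th root of unity survive, whence $\mathrm r(g)$ and $\mathrm r(f)$ have literally the same shape and hence the same $\delta$-sequence. None of this is recoverable from the semigroup invariance alone, so your proposal has a genuine gap.
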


\begin{proof}
If $g$ is equivalent to $f$, then $\mathrm r(g)$ is also equivalent to $\mathrm r(f)$. Hence $\mathrm r(g)=\sigma(\mathrm r(f))$ for some automorphism $\sigma$ of $\KK[x,y]$. Write $\mathrm r(f)=y^n+a_2(x)y^{n-2}+\dots+a_n(x)$ with $\deg_xa_n(x)=r_1<n$ and let $\bar{\sigma}$ be an elementary automorphism of $\KK[x,y]$. Let $\bar{f}=\bar{\sigma}(\mathrm r(f))$.

\begin{enumerate}

\item If $\bar{\sigma}(y)=ay+b$ with $a,b\in\KK^*$, then $\bar{f}=a^ny^n+a^{n-1}by^{n-1}+\bar{a}_2(x)y^{n-2}+\dots+\bar{a}_n(x)$.

\item  If $\bar{\sigma}(y)=ay+h(x)$ with $a\in\KK^*$ and $\deg_xh(x)\geq 1$, then $\deg_y\bar{f}=n < \bar{r}_1=\deg_x\bar{f}$ and $n$ divides $\bar{r}_1$.

\item  If $\bar{\sigma}(y)=h(y)+ax$ with $a\in\KK^*$  and $\deg_yh(y)=p\geq 2$, then $\bar{r}_1=\deg_x\bar{f}=n$ divides  $\deg_y\bar{f}=pn$.
\end{enumerate} 

In all cases, $\bar{f}$ is not the reduced equation of a polynomial with one place at infinity. Furthermore, the total degree of $\bar{f}$ is $\geq n$. Since $\sigma$ is a composition of a finite number of elementary automorphisms, we get the same conclusion if either $\deg_y\sigma(y)>1$ or $\sigma(y)=ay+h(x)$ with $a\not=0$ and $h(x)\notin\KK^*$. Finally, $\sigma(y)=ay$, $a\in\KK^*$ and $\sigma(x)=bx+h(y)$ with $b\not=0$ and either $h(y)=0$ or $\deg_yh(y)r_1<n$. But then the coefficient of $x^{r_1-1}$ in $\sigma(\mathrm r(f))$ is non zero. Hence $\sigma(\mathrm r(f))\not=\mathrm r(g)$. Thus,  $\sigma(y)=ay, \sigma(x)=bx$ with $a,b\in\KK^*$. By definition of the reduced equation, $a$ (respectively $b$) is an $n$th (respectively an $r_1$th) root of unity. This proves our assertion.
\end{proof}

\section{The irreducibility criterion}\label{sec3}

Let $f=y^n+a_1(x)y^{n-1}+\dots+a_n(x)$ be a nonzero polynomial of $\KK[x,y]$ and assume, after possibly a change of variables, that $a_1(x)=0$ and also that $\deg_xa_i(x)<i$ for all $2\leq i\leq n$. Let $d_1=n>d_2>\dots>d_{h+1}=1$ be a set of divisors of $n$ and let $\underline{G}=(G_1,G_2,\dots,G_{h+1}=f)$ be a set of polynomials of degrees $\frac{n}{d_1},\dots,\frac{n}{d_h}$, respectively. 

Let $\underline{r}=(r_0=n,r_1,\dots,r_h)$ such that  $d_k=\gcd(r_0,\dots,r_{k-1})$ for all $1\leq k\leq h+1$ and let $e_k=\frac{d_k}{d_{k+1}}$ for all $1\leq k\leq h$. Let $B=\lbrace \underline{\theta}=(\theta_1,\dots,\theta_h,\theta_{h+1})\mid \hbox{for all } 1\leq k\leq h, 0\leq \theta_k< e_k\rbrace$. Given  $\underline{\theta}\in B$, we associate with $x^{\theta_0}\underline{G}^{\underline{\theta}}= x^{\theta_0}G_1^{\theta_1}\dots G_h^{\theta_h}$ the number $\mathrm{fint}(f, x^{\theta_0}\underline{G}^{\underline{\theta}},\underline{r})=\theta_0r_0+r_1\theta_1+\dots+r_h\theta_h= \underline{\theta}\cdot \underline{r}$ (dot product). Given a nonzero element $c_{\underline{\theta}}(x)\underline{G}^{\underline{\theta}}$, we set 
$\mathrm{fint}(f,c_{\underline{\theta}}(x) \underline{G}^{\underline{\theta}},\underline{r})=\mathrm{fint}(f, x^{\theta_0}\underline{G}^{\underline{\theta}},\underline{r})$, where $\theta_0=\deg_x(c_{\underline{\theta}}(x))$. Let $\alpha(x,y)$ be a nonzero polynomial of $\KK[x,y]$ and assume that $\deg_y\alpha(x,y)<n$. Write:
$$
\alpha(x,y)=\sum_{\underline{\theta}\in B}c_{\underline{\theta}}(x)G_1^{\theta_1}\cdots G_h^{\theta_h}.
$$
We set $\mathrm{fint}(f,\alpha,\underline{r})=\max\lbrace \mathrm{fint}(f,c_{\underline{\theta}}(x)G_1^{\theta_1}\dots G_h^{\theta_h})|c_{\underline{\theta}}(x)\not=0\rbrace$. There is a unique monomial of $\alpha$, say $c_{\underline{\theta^0}}(x) G_1^{\theta^0_1} \cdots G_h^{\theta^0_h}$, such that $\mathrm{fint}(f,\alpha,\underline{r})=\mathrm{fint}(f,c_{\underline{\theta^0}}(x)G_1^{\theta^0_1}\cdots G_h^{\theta^0_h})$. 

Let
$$
f=g_h^{d_h}+\alpha_1g_h^{d_h-1}+\dots+\alpha_{d_h}
$$
be the expansion of $f$ with respect to $g_h$. We say that $f$ is \emph{straight} with respect to $(g_h,\underline{r})$ if the following conditions hold:
\begin{enumerate}

\item $\mathrm{fint}(f,\alpha_{d_h},\underline{r})=r_hd_h$,

\item $\mathrm{fint}(f,\alpha_i,\underline{r}))< id_h$ for all $1\leq i\leq d_h-1$.
\end{enumerate}

\subsection{The criterion \cite{ab2}} 
Let $f$ be as above. Let $r_0=n=d_1$. If $a_n(x)=0$, then $y$ divides $f$, and thus $f$ has at least two places at infinity. Suppose that $a_n(x)\not=0$. Set $g_1=y$, $r_1=\deg_xa_n(x)$, $d_2=\gcd(r_0,r_1)$, and $\underline{r}^1=\left(\frac{r_0}{d_2},\frac{r_1}{d_2}\right)$. Let $g_2=\mathrm{App}_{d_2}(f)$ and let
$$
f=g_2^{d_2}+\alpha^2_2g_2^{d_2-2}+\dots+\alpha^2_{d_2}
$$

\noindent be the expansion of $f$ with respect to $g_2$. We set $r_2=\mathrm{fint}(g_2,\alpha^2_{d_2},\underline{r}^1)$ and $d_3=\gcd(r_2,d_2)$. We now restart with $g_3=\mathrm{App}_{d_3}(f)$, and so on. If $d_i=d_{i+1}$ for some $i$, then $f$ has at least two places at infinity. Suppose that $d_1>d_2>\dots$. There exists $h\geq 1$ such that $d_{h+1}=1$. For all $2\leq i\leq h+1$ we set $\underline{r}^{i-1}=\left(\frac{r_0}{d_i},\frac{r_1}{d_i},\dots, \frac{r_{i-1}}{d_i}\right)$. According to \cite{ab2}, the polynomial $f=g_{h+1}$ has one place at infinity if and only if the following conditions hold:

\begin{enumerate}

\item for all $1\leq i\leq h-1, r_id_i >r_{i+1}d_{i+1}$,

\item for all $1\leq i\leq h, g_{i+1}$ is straight with respect to $(g_i,\underline{r}^{i-1})$.
\end{enumerate}

\begin{exemples} The implementation of \texttt{SemigroupOfValuesOfPlaneCurveWithSinglePlaceAt\-Infi\-nity} contains this criterion.
\begin{enumerate}[i)]

\item Let $f(x,y)=(y^3-x^2)^2-y$. We have $r_0=d_1=6,r_1=4,d_2=2, g_1=y,g_2=y^3-x^2$. Now $r_2=\mathrm{fint}(g_2,y,(3,2))=2$, hence $d_3=2$, consequently $f$ has at least two places at infinity.
\begin{verbatim}
gap> SemigroupOfValuesOfPlaneCurveWithSinglePlaceAtInfinity((y^3-x^2)^2-y);
Error, Error the polynomial is not irreductible or it has not a single place
at infinity called from 
...
\end{verbatim}

\item Let $f(x,y)=(y^3-x^2)^2-x^5y$. We have $r_0=d_1=6,r_1=4,d_2=2, g_1=y,g_2=y^3-x^2$. Now $r_2=\mathrm{fint}(g_2,y,(3,2))=17$, hence $d_3=1$.  Furthermore, the straightness condition is satisfied. However, $r_1d_1=24<r_2d_2=34$, hence $f$ has at least two places at infinity.
\begin{verbatim}
gap> SemigroupOfValuesOfPlaneCurveWithSinglePlaceAtInfinity((y^3-x^2)^2-x^5*y);
Error, The polynomial does not have a single place at infinity or the leading 
coefficient in x is not a rational number called from
...
\end{verbatim}

\item Let $f(x,y)=y^5-x^4+x^4y$.  We have $r_0=d_1=5, r_1=4, d_2=1$, but $f$ is not straight with respect to $(y,(5,4))$ because $\mathrm{fint}(f,x^4, (5,4))=20>4r_1=16$, hence $f$ has at least two places at infinity.
\begin{verbatim}
gap> SemigroupOfValuesOfPlaneCurveWithSinglePlaceAtInfinity(y^5-x^4+x^4*y);
Error, The polynomial does not have a single place at infinity or the leading 
coefficient in x is not a rational number called from sv( arg[1] ) called from
...
\end{verbatim}

\item Let $f(x,y)=((y^3-x^2)^2-xy)^2-(y^3-x^2)$. We have $r_0=d_1=12$, $r_1=8$, $d_2=4$, $g_1=y$, $g_2=y^3-x^2$. Now 
$r_2=\mathrm{fint}(g_2,y,(3,2))=10$, hence $d_3=2$ and  $g_3=(y^3-x^2)^2-xy$. Now $r_3=\mathrm{fint}(g_3,(y^3-x^2,y),(6,4,5))=5$, hence $d_3=1$. Furthermore the straightness condition is satisfied for $g_2$, $g_3$ and $f$. Since $r_1d_1>r_2d_2>r_3d_3$, we deduce that  $f$ has one place at infinity and $\Gamma(f)=\langle 12,8,10,5\rangle$.
\begin{verbatim}
gap> SemigroupOfValuesOfPlaneCurveWithSinglePlaceAtInfinity(f,"all");
[ [ 12, 8, 10, 5 ], [ y, y^3-x^2, y^6-2*x^2*y^3+x^4-x*y ] ]
\end{verbatim}
\end{enumerate}
\end{exemples}

\section{One place curves with a fixed genus}\label{sec4}

Let $f=y^{n}+a_1(x)y^{n-1}+\dots+a_n(x)$ be a polynomial with one place at infinity and assume that $a_1(x)=0$ and also that $\deg_x(a_i(x))<i$ for all $2\leq i\leq n$. Let $r_0=n$, $r_1=\deg_xa_n(x)$, $r_2,\dots,r_h$ be the set of generators of $\Gamma(f)$ constructed as in Section \ref{sec1}. Let $d_1=n$ and for all $1\leq k\leq h$, let $d_{k+1}=\gcd(r_k,d_k)$ and $e_k=\frac{d_k}{d_{k+1}}$. We have
\begin{enumerate}[1)]
\item for all $1\leq k\leq h-1, r_kd_k>r_{k+1}d_{k+1}$,

\item for all $1\leq k\leq h, e_kr_k\in \langle r_0,\dots,r_{k-1}\rangle$.
\end{enumerate}
If furthermore $n=r_0=d_1>r_1>d_2>\dots >d_{h+1}=1$, then $\mu(\mathrm{App}_{d_2}(f))>0$ and $(r_0,\dots,r_h)$ is a $\delta$-sequence.  Conversely,  given a sequence of coprime integers $\underline{r}=(r_0,r_1,\dots,r_h)\in\NN$, if  $d_1=n$ and $d_{k+1}=\gcd(r_k,d_k), e_k=\frac{d_k}{d_{k+1}}$ for all $1\leq k\leq h$ and if Conditions 1) and 2) above are fulfilled, then there exists a polynomial $f$ with one place at infinity such that $\Gamma(f)=\langle r_0,\dots,r_h\rangle$. If furthermore $r_0=d_1>r_1>d_2>\dots >d_{h+1}=1$, then there exists  a polynomial $f=y^{r_0}+a_2(x)y^{r_0-2}+\dots+a_{r_0}(x)$ with $\mathrm{rdeg}(f)=r_0$, $\deg_xa_{r_0}(x)=r_1$, and $\Gamma(f)=\langle r_0,\dots,r_h\rangle$. The straightness of the set of generalized Newton polygons gives us that set all such polynomials. Set $n=r_0$ and $r_1=m$ and assume that $n>m$. We have the following algorithmic construction of these polynomials.

\begin{itemize}
\item $h=1$: $f(x,y)=y^{n}+a_1x^{r_1}+\sum_{ni+r_1j<nm}c_{ij}x^iy^j$, where $a_1\in\KK^*$.

\item $h>1$: Let $\underline{r}^ h=\left(\frac{r_0}{d_h}, \frac{r_1}{d_h},\dots,\frac{r_{h-1}}{d_h}\right)$. The sequence $\underline{r}^ h$ satisfies the same conditions as $\underline{r}$. Let $g$ be a polynomial with one place at infinity such that $\mathrm{rdeg}(g)=$ $\deg_yg=\frac{n}{d_h}$, $\deg_xg=\frac{r_1}{d_h}$, and $\Gamma(g)=\langle \underline{r}^h\rangle$. Let $B=\lbrace \underline{\theta}=(\theta_0,\dots,\theta_{h-1}) \mid \hbox{for all } i=1,\dots,h-1,\theta_i<e_i\rbrace$ and let $\underline{\theta}^0$ be the unique element of $B$ such that $r_hd_h=\sum_{i=0}^{h-1}\theta_i^0r_i$. We set
$$
f=g_h^{d_h}+\alpha_2(x,y)g_h^{d_h-2}+\dots+\alpha_{d_h}(x,y)+a_hx^{\theta_0^0}g_1^{\theta_1^0}\dots g_{h-1}^{\theta_{h-1}^0}, 
$$
where for all $2\leq i\leq d_h$, if $\alpha_i(x,y)\not=0$ then $\deg_y\alpha_i(x,y)<\frac{n}{d_h}$. Furthermore, write $\alpha_i(x,y)= \sum_{\underline{\theta}\in B}c_{\underline{\theta}}(x)g_1^{\theta_1}\dots g_{h-1}^{\theta_{h-1}}$. If $c_{\underline{\theta}}(x)\not=0$, then 
$\deg_xc_{\underline{\theta}}(x)r_0+\theta_1r_1+\dots+\theta_{h-1}r_{h-1}<ir_h$.
\end{itemize}

\begin{example}
Let us compute a curve associated to the $\delta$-sequence $(6,4,3)$. In order to see the recursive process, we set the information level of the \texttt{numericalsgps} package to 2.
\begin{verbatim}
gap> SetInfoLevel(InfoNumSgps,2);
gap> CurveAssociatedToDeltaSequence([6,4,3]);
#I  Temporal curve: y^3-x^2
#I  Temporal curve: y^6-2*x^2*y^3+x^4-x
y^6-2*x^2*y^3+x^4-x
\end{verbatim}
\end{example}

\section{Abhyankar semigroups with a given genus}\label{sec5}

Let $g$ be a  positive integer and let $\mu=2g$. There exists a numerical symmetric semigroup $\Gamma$ and a system of generators of $\Gamma$ such that the following conditions hold:

\begin{enumerate}

\item $\mu$ is the conductor $\mathrm C(\Gamma)$ of $\Gamma$,

\item $\Gamma=\langle r_0,r_1,\dots,r_h\rangle$, $r_0 > r_1$, and $(r_0,\dots,r_1)$ is a $\delta$-sequence.
\end{enumerate}

In particular, $\Gamma=\Gamma(f)$, where $f\in\KK[x,y]$ is a polynomial with one place at infinity such that $\mathrm{rdeg}(f)=r_0$. The simplest example of such a semigroup if $\langle \mu+1,2\rangle$. Note that if $\Gamma$ is such a semigroup, then $\mu\geq 2(2^h-1)$ (see \cite[Proposition 6.7]{a-1}). In particular $h\leq b=\log_2(g+1)$. Let $1\leq h\leq b$. 

In light of Proposition \ref{recursive-appr}, if we denote by $\mu_h=\mathrm{C}(\langle r_0/d_h,\ldots, r_{h-1}/d_h\rangle)$, then $\mu=d_h\mu_h+(d_h-1)r_h-d_h+1 =d_h\mu_h+(r_h-1)(d_h-1)$ and $\gcd(r_h,d_h)=d_{h+1}=1$. 
\begin{itemize}
\item If $h=1$, then $\mu=(r_0-1)(r_1-1)$. Since $r_0>r_1$, $(r_1-1)^2< \mu$. Hence $r_1< \sqrt{\mu}+1$, which is less than or equal to $\mu-1$ for all $\mu\ge 4$. Notice that $\mu$ cannot be $3$, and the case $\mu=2$ is $\langle 3,2\rangle$.

\item For $h>1$, $d_h\mu_h>1$ and $d_h\ge 2$. Consequently, $r_h-1\le (r_h-1)(d_h-1)=\mu-d_h\mu_h<\mu-1$, whence $r_h<\mu$.
\end{itemize}
Observe also that $\mu-1$ is the Frobenius number of $\Gamma$, and so $r_h\neq \mu-1$. This implies that for $\Gamma\neq \langle 3,2\rangle$,
\[2\le r_h\le \mu-2.\]
Thus there are finitely many possible $r_h$, and for each of these $r_h$, $(d_h-1)(r_h-1)\le (d_h-1)(r_h-1)+d_h\mu_h=\mu$. Hence 
\[2\le d_h\le\frac{\mu}{r_h-1}+1.\]
For each pair $r_h$ and $d_h$ we find recursively the sequences $\langle r_0',\ldots, r_{h-1}'\rangle$ with conductor $\mu_h$. This was the idea used to implement \texttt{DeltaSequencesWithFrobeniusNumber}.

\begin{nota}
From Lemma \ref{form-c}, $\mathrm{int}(f,f_y)=\mu+r_0-1\leq r_0(r_0-1)$ by B\'ezout's Theorem. Hence $\mu\leq (r_0-1)^2$, which implies  that $r_0\geq \sqrt{\mu}+1$.
\end{nota}

\begin{example}
It may happen that several $\delta$-sequences generate the same numerical semigroup. This is why in general there are more $\delta$-sequences than Abhyankar semigroups for a fixed genus.
\begin{verbatim}
gap> l:=DeltaSequencesWithFrobeniusNumber(13);
[ [ 6, 4, 11 ], [ 8, 3 ], [ 8, 6, 3 ], [ 9, 6, 5 ], [ 10, 4, 7 ], [ 12, 8, 3 ],
  [ 12, 8, 6, 3 ], [ 15, 2 ], [ 15, 6, 2 ], [ 15, 10, 2 ] ]
gap> Length(l);
10
gap> Length(Set(l,NumericalSemigroup));
5
\end{verbatim}

Next figure plots the number of Abhyankar semigroups and $\delta$-sequences.

\includegraphics[scale=.6]{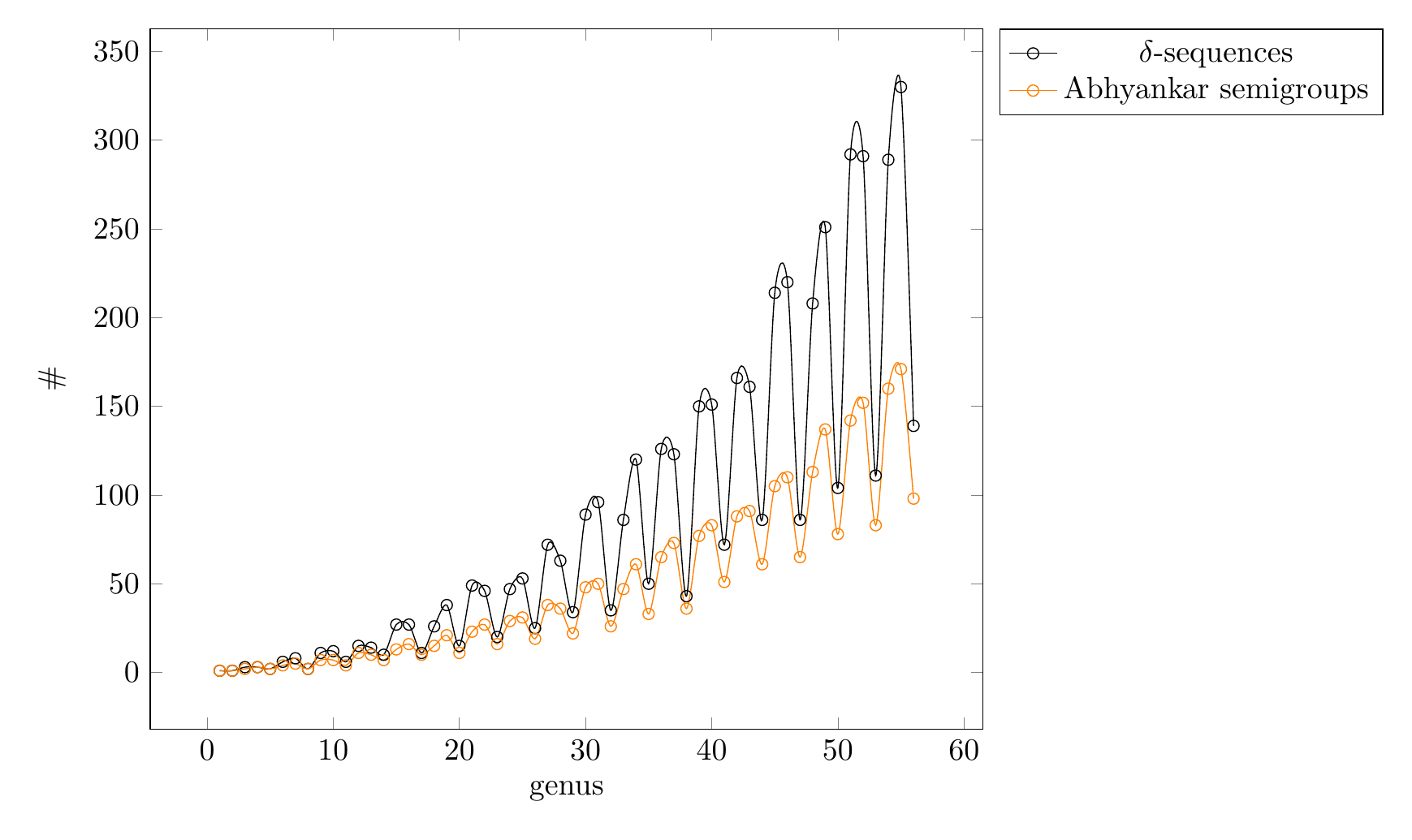} 

The following plotting  compares the number of Abhyankar semigroups with given genus with other well known families of complete intersection numerical semigroups.

\includegraphics[scale=.9]{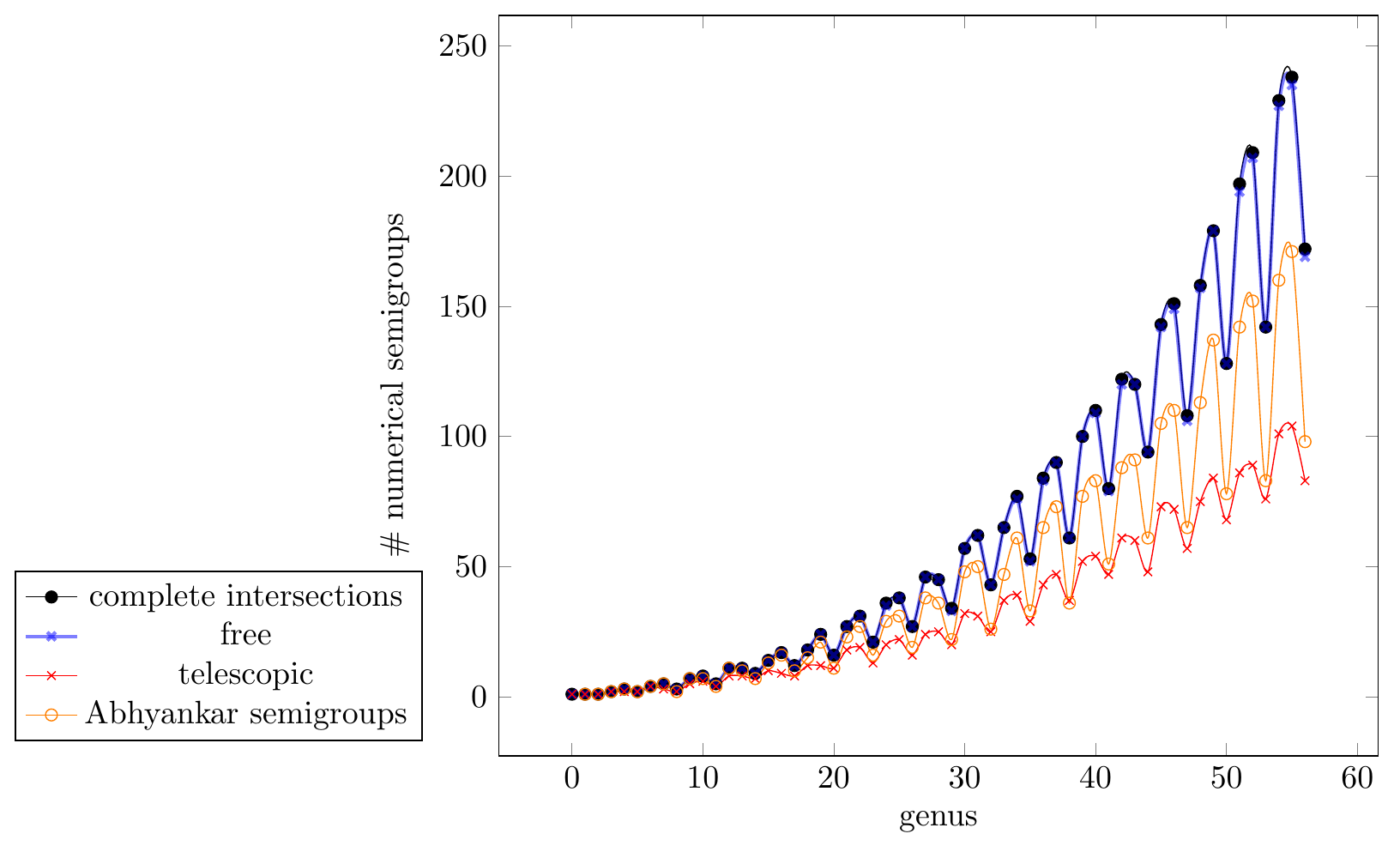}

\end{example}

Let $\Gamma$ be a semigroup with conductor $\mu=2g$ and let $f$ be a polynomial with one place at infinity such that $\Gamma(f)=\Gamma$. Let $(f-\lambda)_{\lambda\in\KK}$ be the pencil of polynomials defined by $f$. Given an  element $F=f-\lambda_0$ of the pencil, if $\mu_{\infty}$ denotes the local Milnor number of $F$ at the point at infinity, then we have $\mu+\mu_{\infty}=(r_0-1)(r_0-2)$. Also, if $g$ denotes the genus of the curve $\mathrm V(F)$, then $2g+\sum_{p\in \mathrm V(F)}\mu_p+r_p-1+\mu_{\infty}=(r_0-1)(r_0-2)$, where $\mu_p$ (respectively $r_p$) denotes the Milnor number (respectively the number of places) at $p$. If the curve $\mathrm V(F)$ is nonsingular in $\KK^2$, then $2g+\mu_{\infty}=(r_0-1)(r_0-2)=\mu+\mu_{\infty}$, hence $\mu=2g$. In particular $g$ is the geometric genus of a nonsingular element of the pencil $(f-\lambda)_{\lambda\in\KK}$.

\section{Embedding of one place curves in the affine plane}\label{sec6}

Let $f(x,y)$ be a nonzero polynomial of $\KK[x,y]$ and let $A(f)={{\KK[x,y]}/ {(f)}}$. Given another polynomial $g$, we say that $f$ and $g$ are isomorphic if the two $\KK$-algebras $A(f)$ and  $A(g)$ are isomorphic. We say that $f$ is equivalent to $g$ if $g=\sigma(f)$ for some automorphism $\sigma$ of $\KK[x,y]$. Then the following natural question arises: which isomorphic polynomials are equivalent?

Suppose that $f$ has one place at infinity and let $g$ be a polynomial with one place at infinity. It follows from Theorem \ref{th-equiv} that if $f$ is equivalent  to $g$,  then $f$ and $g$ have the same $\delta$-sequence. The converse being not true in general.

Let $x(t)=t^n+c_1t^{n-1}+\dots+c_n, y(t)=t^m+c'_1t^{m-1}+\dots+c'_m$ be two polynomials of $\KK[t]$. Suppose that $n>m>\gcd(n,m)$ and let $f(x,y)=y^n+a_1(x)y^{n-1}+\dots+a_n(x)$ be the $y$-monic generator of the kernel of the map $\phi:\KK[x,y]\to \KK[t], \phi(x)=x(t)$ and $\phi(y)=y(t)$ (there is such a generator because the curve has a single place at infinity). Then $f$ has one place at infinity and $A(f)\cong \KK[x(t),y(t)]$. Let $g$ be another curve with one place at infinity. Then $f$ is isomorphic to $g$ if and only if $A(g)\cong \KK[x_1(t),y_1(t))]\cong \KK[x(t),y(t)]$.  We have the following result.

\begin{proposicion}\label{equiv}
Let $x(t),y(t),x_1(t),y_1(t)\in\KK[t]$ and let $f$ (respectively $g$) be the $y$-monic generator of the map $\phi:\KK[x,y]\to \KK[t], \phi(x)=x(t)$ and $\phi(y)=y(t)$ (respectively $\psi:\KK[x,y]\to \KK[t], \psi(x)=x_1(t), \psi(y)=y_1(t)$). If $A(f)\cong \KK[x(t),y(t)]\cong A(g)\cong \KK[x_1(t),y_1(t)]$, then  $\Gamma(f)=\Gamma(g)$.
\end{proposicion}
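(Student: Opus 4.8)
The plan is to show that the value semigroup $\Gamma(f)$ is an \emph{isomorphism invariant} of the coordinate ring $A(f)$, so that $A(f)\cong A(g)$ forces $\Gamma(f)=\Gamma(g)$. The key observation is that $\Gamma(f)=\{\mathrm{int}(f,p)\mid p\in\KK[x,y]\setminus(f)\}$ can be reinterpreted intrinsically in terms of the $\KK$-algebra $A(f)=\KK[x(t),y(t)]\subseteq\KK[t]$. Indeed, since $f$ has a single place at infinity, the map $\phi$ identifies $A(f)$ with the subring $\KK[x(t),y(t)]$ of $\KK[t]$, and for $p\in\KK[x,y]\setminus(f)$ the intersection number $\mathrm{int}(f,p)=\mathrm{rank}_\KK(\KK[x,y]/(f,p))$ coincides with the $t$-degree $\deg_t p(x(t),y(t))$ of the image of $p$ in $\KK[t]$. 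Thus
\[
\Gamma(f)=\{\deg_t(\varphi(t))\mid \varphi\in A(f)\setminus\{0\}\},
\]
the set of degrees (equivalently, the set of orders at the place at infinity, i.e.\ the negatives of the pole orders) attained by nonzero elements of the subring $A(f)\subseteq\KK[t]$.

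Next I would argue that this degree-set is preserved under any $\KK$-algebra isomorphism $\eta:A(f)\to A(g)$. Writing $A(f)=\KK[x(t),y(t)]$ and $A(g)=\KK[x_1(s),y_1(s)]$ as subrings of $\KK[t]$ and $\KK[s]$ respectively, the point is that the function $\varphi\mapsto\deg_t\varphi$ is the valuation coming from the unique place at infinity of the curve, and a unique place at infinity is a purely algebraic feature of the integral closure / normalization of $A(f)$ in its quotient field. Concretely, $\KK[t]$ is the integral closure of $A(f)$, the inclusion $A(f)\hookrightarrow\KK[t]$ is canonical, and the degree valuation $-\deg_t$ is the unique valuation of $\KK(t)$ that is nonnegative on $\KK[t]$ and negative on $t$; hence it is intrinsic to the abstract algebra $A(f)$ together with its normalization. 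An isomorphism $\eta$ extends to the normalizations, carrying $t$ to some element of $\KK[s]$ of degree one (a unit times a coordinate change $s\mapsto as+b$), and therefore preserves $t$-degree up to the fixed factor $1$. Consequently $\deg_t\varphi=\deg_s\eta(\varphi)$ for all $\varphi$, and the two degree-sets coincide, giving $\Gamma(f)=\Gamma(g)$.

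I expect the main obstacle to be the verification that the isomorphism $\eta$ really does respect the degree valuation, i.e.\ that it sends the parameter $t$ (up to scalar and translation) to the parameter $s$. This reduces to showing that the normalization map $A(f)\hookrightarrow\KK[t]$ is functorial and that the place at infinity is unique and hence canonical: since $A(f)$ has a single place at infinity, its normalization $\KK[t]$ has a single point over infinity, the corresponding discrete valuation is the only one negative on the non-constant elements, and any $\KK$-algebra isomorphism must permute the set of such valuations, which here is a singleton. The delicate step is confirming that the induced automorphism of $\KK[t]$ is of the form $t\mapsto as+b$ with $a\in\KK^*$; this follows because $\eta$ is a $\KK$-algebra isomorphism between the normalizations $\KK[t]\cong\KK[s]$, and every $\KK$-algebra automorphism of a polynomial ring in one variable is affine-linear. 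Once this is in place the equality $\Gamma(f)=\Gamma(g)$ is immediate from the degree-set description above.
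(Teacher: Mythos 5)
Your proposal is correct, and its core is the same identification the paper uses, namely $\Gamma(f)=\{\deg_t\varphi(t)\mid \varphi\in A(f)\setminus\{0\}\}$ via $\mathrm{int}(f,p)=\deg_t p(x(t),y(t))$; but the two arguments are packaged differently. The paper's proof is an explicit element-chase: it takes the isomorphism to mean $x(t),y(t)\in\KK[x_1(t),y_1(t)]$, writes $x(t)=P(x_1(t),y_1(t))$, $y(t)=Q(x_1(t),y_1(t))$, and transports each witness $h$ with $\mathrm{int}(f,h)=a$ to $h_1=h(P,Q)$ with $\mathrm{int}(g,h_1)=a$, giving both inclusions by symmetry. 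You instead prove invariance abstractly: any $\KK$-algebra isomorphism $\eta\colon A(f)\to A(g)$ extends to the normalizations $\KK[t]\to\KK[s]$ (here you should note, as you assert but do not prove, that $\KK[t]$ is indeed the integral closure of $A(f)$ --- this follows because $\Gamma(f)$ is numerical, so $\KK[t]$ is a finite $A(f)$-module), and since every $\KK$-algebra isomorphism $\KK[t]\to\KK[s]$ is of the form $t\mapsto as+b$ with $a\in\KK^*$, degrees are preserved and the degree-sets coincide. This buys something real: the paper's step ``$x(t),y(t)\in\KK[x_1(t),y_1(t)]$'' is not literally a consequence of an abstract isomorphism $A(f)\cong A(g)$ --- it holds only after reparametrizing $t$, and your normalization-plus-affine-automorphism argument is precisely the justification the paper leaves tacit; conversely, the paper's version is shorter and avoids any appeal to integral closure. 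One small slip in your write-up: your characterization of the degree valuation as ``nonnegative on $\KK[t]$ and negative on $t$'' is self-contradictory (it is negative on all nonconstant polynomials; the correct intrinsic description is that it is the unique valuation of $\KK(t)$ over $\KK$ that is negative on $t$, i.e.\ not centered on the affine line). This sentence is inessential, since your subsequent affine-automorphism argument does the work, but it should be corrected.
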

\begin{proof} 
The fact that $x(t),y(t)\in \KK[x_1(t),y_1(t)]$ implies that there exist $P(x,y),Q(x,y)$ such that $x(t)=P(x_1(t),y_1(t))$ and $y(t)=Q(x_1(t),y_1(t))$.  Let $a\in \Gamma(f)$ and let $h\in\KK[x,y]$ be such that $\mathrm{int}(f,h)=\deg_t h(x(t),y(t))=a$. If $h_1(x,y)=h(P(x,y),Q(x,y))$, then \[h_1(x_1(t),y_1(t)) =h(P(x_1(t),y_1(t)),Q(x_1(t),y_1(t))= h(x(t),y(t))\] and $\deg_th_1(x_1(t),y_1(t))=a=\mathrm{int} (g,h_1)$, whence $a\in\Gamma(g)$. We prove in a similar way that $\Gamma(g)$ is contained in $\Gamma(f)$.
\end{proof}

\begin{example} 
Isomorphic does not imply equivalent for polynomial curves. Let $z(t)=t^3-a,a\not=0, x(t)=tz(t)=t^4-at$, and $y(t)=z(t)^2+\frac{a}{2}z(t)$. We have $x(t)^3=t^3z(t)^3=z(t)^4+az(t)^3$. We shall prove that $\KK[x(t),y(t)]=\KK[x(t),z(t)]$. The first inclusion is clear. Also, $y(t)^2=z(t)^4+az(t)^2+\frac{a^2}{4}z(t)^2=x(t)^3+\frac{a^2}{4}y(t)-\frac{a^3}{8}z(t)$, whence $\frac{a^3}{8}z(t)=y(t)^2-x(t)^3-\frac{a^2}{4}y(t)\in\KK[x(t),y(t)]$.  Let $f$ (respectively $g$) be the $y$-monic generator of the map $\phi:\KK[x,y]\to \KK[t]$, $\phi(x)=x(t)$ and $\phi(y)=y(t)$ (respectively $\psi:\KK[x,y]\to \KK[t]$, $\psi(x)=x(t)$, $\psi(y)=z(t)$). Then $f(x,y)=\frac{1}{2}a^2x^3y-\frac{1}{2}a^2y^3+x^6-2x^3y^2+y^4+\frac{1}{16}a^4x^3=\left( y^3-x^2-\frac{1}{4}a^2\right)^2+ \frac{1}{2}a^2x^3y+\frac{1}{16}a^4x^3-\frac{1}{2}a^2x^2-\frac{1}{16}a^4$ and $g(x,y)=y^4+ay^3-x^3$. Hence the $\delta$-sequence of $f$ (resp. $g$) is $\langle 6,4,3\rangle$ (resp. $\langle 4,3\rangle$). By Theorem \ref{th-equiv}, $f$ is not equivalent to $g$.
\end{example}

This example provides a counter example for the conjecture stated in \cite{y}.
 
\begin{nota}
\begin{enumerate}[(i)]
\item  Let $f(x,y)=1-xy$ and $g(x,y)=1-xy^k, k\geq 2$. Let $C_1:f=0$, $C_k:g=0$, and let $\phi: C_1\to C_k, \phi(a,b)=(a^k,b)$ and $\psi: C_k\to C_1, \psi(c,d)=(cd^{k-1},d)$. We have $\psi\circ\phi(a,b)=\psi(a^k,b)=(a^kb^{k-1},b)=((ab)^{k-1}a,b)=(a,b)$ and $\phi\circ\psi(c,d)=\phi(cd^{k-1},d)=(c^kd^{k(k-1)},d)=(c(cd^k)^{k-1},d)=(c,d)$. Hence $C_1$ and $C_k$ are isomorphic. Clearly $C_1$ and $C_k$ are not equivalent. Thus we have infinitely many inequivalent embeddings of $C_1$ in the affine plane.  Note that $C_1$ (respectively $C_k$) has two places at infinity.

\item Let $f(x,y)$ be a plane curve with one place at infinity and let $\mathrm r(f)=y^n+a_2(x)y^{n-2}+\dots+a_n(x)$. If $g$ is another one place curve such that $A(f)$ is isomorphic to $A(g)$, then $\Gamma(f)=\Gamma(g)$. Hence the number of nonequivalent embeddings of $f$ is bounded by the number of semigroups $\bar{\Gamma}=\langle r_0,r_1,\dots,r_h\rangle$ such that:

\begin{enumerate}[(1)]
\item  $\bar{\Gamma}=\Gamma(f)$,

\item $\bar{\Gamma}=\langle r_0,r_1,\dots,r_h\rangle$ and $(r_0,r_1,\dots,r_h)$ is a $\delta$-sequence.
\end{enumerate} 
\end{enumerate}
\end{nota}

\begin{example}
Let $f(x,y)$ be such that $\Gamma(f)=\langle 7,6\rangle$. We have $\mu(f)=30$. Let $h=2$ and let $\bar{\Gamma}=\langle r_0,r_1,r_2\rangle$ with the Properties (1) and  (2). Since $7$ is a prime number, $r_0>7$. Hence $r_0\in \langle 7,6\rangle$. Furthermore, $r_2$ is either $6$ or $7$. If $r_2=6$, then $d_2=7$. Thus $\mu(f)=30=\mu_2d_2+(d_2-1)(r_2-1)=7\mu_2+30$, and $\mu_2=0$. In particular $(r_0,7,6)$ is not reduced. Finally $r_2=7$ and $d_2=2,3$. This gives us the following solutions: $\delta_1=( 14,6,7)$, and $\delta_2=(21,6,7)$. Note that if $f(x,y)=y^7-x^6-x$, $g(x,y)=(y^7-x^3)^2-x$, $h(x,y)=(y^7-x^2)^3-x$, then $A(f), A(g)$, and $A(h)$ are isomorphic (though they are not equivalent since their associated $\delta$-sequences are different).

We can perform this task with our \texttt{GAP} implementation.
\begin{verbatim}
gap> s:=NumericalSemigroup(6,7);
<Modular numerical semigroup satisfying 7x mod 42 <= x >
gap> FrobeniusNumber(s);
29
gap> DeltaSequencesWithFrobeniusNumber(29);
[ [ 7, 6 ], [ 8, 6, 19 ], [ 9, 6, 13 ], [ 10, 6, 15 ], [ 11, 4 ], [ 12, 8, 10, 15 ], 
  [ 12, 8, 14, 11 ], [ 12, 9, 7 ], [ 14, 4, 19 ], [ 14, 6, 7 ], [ 15, 6, 10 ],
  [ 15, 10, 6 ], [ 16, 3 ], [ 16, 6, 3 ], [ 16, 12, 3 ],  [ 16, 12, 6, 3 ], 
  [ 18, 4, 15 ], [ 18, 12, 9, 7 ],[ 18, 12, 15, 4 ], [ 21, 6, 7 ], [ 22, 4, 11 ],
  [ 24, 16, 3 ], [ 24, 16, 6, 3 ], [ 24, 16, 12, 3 ],  [ 24, 16, 12, 6, 3 ], 
  [ 27, 6, 4 ], [ 31, 2 ] ]
gap> Filtered(last, gs->NumericalSemigroup(gs)=s);
[ [ 7, 6 ], [ 14, 6, 7 ], [ 21, 6, 7 ] ]
\end{verbatim}
In general, given $f$, this procedure gives us the $\delta$-sequences of the candidates $g$ such that $A(f)\cong A(g)$. 
\end{example}

\section*{Acknowledgements}
The authors would like to thank the Departamento de \'Algebra of the Universidad de Granada, and the Laboratoire de Math\'ematiques of the Universit\'e d'Angers, respectively, for their kind hospitality.


\begin{thebibliography}{11}
\bibitem{ab-1} S.S. Abhyankar, On the semigroup of a meromorphic curve, Part 1, in Proceedings of
International Symposium on Algebraic Geometry, Kyoto, (1977), 240-414.

\bibitem{ab-3} S.S. Abhyankar, Lectures on Expansion Techniques In
Algebraic Geometry, Tata Institute Of Fundamental Research Bombay (1977).

\bibitem{ab-n} S.S. Abhyankar, Algorithmic Algebraic Geometry, Lecture Notes by C. Bajaj, Purdue University (1986).

\bibitem{ab2} S.S. Abhyankar, Irreducibility criterion for germs of analytic functions of two complex variables, Advances in Mathematics \textbf{74} (1989), 190-257.

\bibitem{a-m-1} S.S. Abhyankar and T.T. Moh, Newton Puiseux expansion and generalized Tschirnhausen ransformation, J.Reine Angew.Math,\textbf{260}, 47-83 and \textbf{261} (1973), 29-54.

\bibitem{a-m-2} S.S. Abhyankar and T.T. Moh, Embedding of the line in the plane, J. Reine Angew. Math., \textbf{276}(1975), 148-166.



\bibitem{a-1} A. Assi, Deux remarques sur les racines approch\'ees d'Abhyankar-Moh, C.R.A.S., t.\textbf{319}, Serie 1 (1994), 1191-1196.

\bibitem{a-2} A. Assi, Meromorphic plane curves, Math. Z. \textbf{230} (1999), no. 1, 165-183.

\bibitem{numericalsgps} M.~Delgado, P.A. Garc{\'i}a-S{\'a}nchez, and J.~Morais,
  ``numericalsgps'': a {\sf {g}{a}{p}} package on numerical semigroups,\\
  \verb+(http://www.gap-system.org/Packages/numericalsgps.html)+.

\bibitem{f-s} M. Fujimoto and M. Suzuki, Construction of affine plane curves with one place at infinity, Osaka J. Math., \textbf{39} (2002), 1005-1027.

\bibitem{gap} The GAP~Group, \emph{GAP -- Groups, Algorithms, and Programming, 
  Version 4.7.5}; 
  2014,\\
  \verb+(http://www.gap-system.org)+.


\bibitem{ns} J.C. Rosales and P.A. Garc\'{\i}a-S\'anchez, Numerical Semigroups, Developments in Mathematics, \textbf{20}. Springer, New York, 2009.

\bibitem{y} V.  Shpilrain and J.-T. Yu, Embeddings of Curves in the Plane, Journal of Algebra \textbf{217}(1999), 668-678.


\end{thebibliography}
\end{document}